\newcommand{\bbC}{\mathbb{C}}
\newcommand{\bbN}{\mathbb{N}}
\newcommand{\bbR}{\mathbb{R}}
\newcommand{\calA}{\mathcal{A}}
\newcommand{\calL}{\mathcal{L}}
\DeclareMathOperator{\id}{id} % identity operator
\DeclareMathOperator{\one}{{\mathbbm{1}}} % constant function with value one
\DeclareMathOperator{\re}{Re} % real part
\newcommand{\argument}{\mathord{\,\cdot\,}} % argument dot for functions (with correct spacing)
\newcommand{\dx}{\;\mathrm{d}} % differential (for use at the end of integrals)
\newcommand{\norm}[1]{\left\lVert #1 \right\rVert} % norm
\newcommand{\modulus}[1]{\left\lvert #1 \right\rvert} % modulus
\newcommand{\duality}[2]{\left\langle#1\, ,\, #2\right\rangle} % duality / scalar product
\newcommand{\dom}[1]{\operatorname{dom}\left(#1\right)} % domain of an operator
\DeclareMathOperator{\Ima}{Rg} % image / range of a mapping
\DeclareMathOperator{\rank}{rk} % image / range of a mapping
\newcommand{\tr}{\operatorname{tr}} % trace
\newcommand{\nl}{\operatorname{NL}} % non-local boundary
\newcommand{\spec}{\sigma} % spectrum
\newcommand{\resSet}{\rho}
\newcommand{\Res}{\mathcal{R}} % resolvent
\newcommand{\spb}{s} % spectral bound
\newcommand{\gbd}{\omega_0} % growth bound of a C_0-semigroup
\theoremstyle{definition}
\newtheorem{definition}{Definition}[section]
\newtheorem{remark}[definition]{Remark}
\newtheorem{remarks}[definition]{Remarks}
\newtheorem*{remark*}{Remark}
\newtheorem*{remarks*}{Remarks}
\newtheorem{example}[definition]{Example}
\theoremstyle{plain}
\newtheorem{proposition}[definition]{Proposition}
\newtheorem{lemma}[definition]{Lemma}
\newtheorem{theorem}[definition]{Theorem}
\numberwithin{equation}{section} % enumerate formulas within sections
\begin{document}

\title[Eventual domination of semigroups and resolvents]{Criteria for eventual domination of operator semigroups and resolvents}
\author{Sahiba Arora}
\address{Sahiba Arora, Technische Universität Dresden, Institut für Analysis, Fakultät für Mathematik , 01062 Dresden, Germany}
\email{sahiba.arora@mailbox.tu-dresden.de}
\author{Jochen Gl\"uck}
\address{Jochen Gl\"uck, Bergische Universität Wuppertal, Fakultät für Mathematik und Naturwissenschaften, 42119 Wuppertal, Germany}
\email{glueck@uni-wuppertal.de}
\subjclass[2020]{47B65; 47D06; 46B42; 35B09}
\keywords{Eventual positivity; eventual domination; maximum principle; anti-maximum principle; Cesáro means}
\date{\today}
\begin{abstract}
	We consider two $C_0$-semigroups $(e^{tA})_{t \ge 0}$ and $(e^{tB})_{t \ge 0}$ on function spaces (or, more generally, on Banach lattices) and analyse eventual domination between them in the sense that $\modulus{e^{tA}f} \le e^{tB}\modulus{f}$ for all sufficiently large times $t$. We characterise this behaviour and prove a number of theoretical results which complement earlier results given by Mugnolo and the second author in the special case where both semigroups are positive for large times.
	
	Moreover, we study the analogous question of whether the resolvent of $B$ eventually dominates the resolvent of $A$ close to the spectral bound of $B$. This is closely related to the so-called maximum and anti-maximum principles. In order to demonstrate how our results can be used, we include several applications to concrete differential operators.
	
	At the end of the paper, we demonstrate that eventual positivity of the resolvent of a semigroup generator is closely related to eventual positivity of the Cesàro means of the associated semigroup.
\end{abstract}

\maketitle

\section{Introduction}

\subsection*{Motivation and context}

Quite recently, the theory of positive $C_0$-semigroups on functions spaces -- or more generally, Banach lattices or ordered Banach spaces -- (see, for instance, the monographs \cite{Nagel1986, BatkaiKramarRhandi2017} and the survey paper \cite{BattyRobinson1984}) was complemented by the study of the following more subtle type of behaviour:~a $C_0$-semigroup on a Banach lattice is called \emph{eventually positive} if, for each positive initial value, the orbit becomes and stays positive for large times.
The basics of the theory are laid out in the papers \cite{DanersGlueckKennedy2016a, DanersGlueckKennedy2016b} and have later been complemented in various articles which adapt the theory, for instance, to the more subtle concept of \emph{local eventual positivity} \cite{Arora2021}.
Recent results indicate that large fractions even of the more involved parts of the theory of positive semigroups can be adapted to the eventually positive case, though significant changes to the employed methods are sometimes needed to achieve this.
We refer to \cite{Vogt2021} for an instance of a very recent spectral-theoretic result of this type.

Concrete examples of semigroups that satisfy eventual positivity or a variation thereof abound. 
To list only a few examples, we explicitly mention the semigroup generated by $-\Delta^2$ on $\bbR^d$ (see \cite{FerreroGazzolaGrunau2008, GazzolaGrunau2008}, and also \cite{FerreiraFerreira2019}), the semigroup generated by the Dirichlet-to-Neumann operator on the unit circle in $\bbR^2$ \cite{Daners2014}, certain classes of semigroups on metric graphs \cite[Proposition~5.5]{BeckerGregorioMugnolo2021}, and semigroups generated by Laplace operators which are coupled by point interactions \cite[Proposition~2]{HusseinMugnolo2020}.
This list is not exhaustive but provides an impression of the large stock of examples for eventually positive behaviour.
It is also worthwhile to note that, in finite dimensions, a general theory of eventually positive semigroups was developed earlier than its infinite-dimensional analogue; see, in particular, \cite{NoutsosTsatsomeros2008}.

Building upon the theory of eventual positivity, the closely related phenomenon of \emph{eventual domination} between two $C_0$-semigroups was studied in \cite{GlueckMugnolo2021}, where also a large variety of examples were presented.

In this article, we complement the theory of both eventual positivity and eventual domination by providing a number of natural extensions of earlier results.
In particular, we sharpen and refine some of the results on eventual domination of semigroups from \cite{GlueckMugnolo2021}, and also demonstrate that similar results can be shown for eventual domination of resolvents of operators.
Finally, we demonstrate that the interplay between eventual positivity of semigroups and of resolvents that plays an essential role in \cite{DanersGlueckKennedy2016a, DanersGlueckKennedy2016b}, can also be extended to Cesàro means of semigroups.

\subsection*{Notation and terminology}

Throughout the paper, we assume familiarity with the theory of real and complex Banach lattices for which we refer to the monographs \cite{Schaefer1974, Meyer-Nieberg1991}. So, let $E$ be a complex Banach lattice whose real part is denoted by $E_{\bbR}$. 
An operator $A:E\supseteq \dom{A} \to E$ is called \emph{real} if $\dom{A}$ is spanned by $\dom{A} \cap E_\bbR$ and $A$ maps real vectors to real vectors, i.e., it maps $\dom{A}\cap E_{\bbR}$ into $E_{\bbR}$. Note that if $A$ is a real operator, then so are the resolvent operators $\Res(\lambda,A)$ for each real number $\lambda$ in the resolvent set of $A$. In particular, an operator $T:E\to E$ is real if $TE_{\bbR}\subseteq E_{\bbR}$. Also, a $C_0$-semigroup $(e^{tA})_{t\geq 0}$ is called real if each operator $e^{tA}$ is real.

For two vectors $u,v$ in $E_\bbR$, we write $u\succeq v$ (or $v\preceq u$) if there exists a constant $c>0$ such that $u\geq cv$. With the above notation in mind, the \emph{principal ideal} of $E$ generated by a positive element $u\in E$ is given by
\[
	E_u:=\{x\in E: \modulus{x}\preceq u\}
\]
which is also a complex Banach lattice when equipped with the \emph{gauge norm} defined by
\[
	\norm{x}_u:=\inf\{c>0 : \modulus{x}\leq cu\}\quad \text{for each } x\in E_u.
\]
We say that $u$ is a \emph{quasi-interior point} of $E$ if $E_u$ is dense in $E$. For example, if $C(K)$ denotes the space of continuous functions on a compact Hausdorff space $K$, then the set of quasi-interior points is precisely the set of all $f\in C(K)$ such that $f(x)>0$ for all $x\in K$. In fact, if $u\in C(K)$ is a quasi-interior point, then $C(K)_{u}=C(K)$. On the other hand, if $(\Omega,\mu)$ is a finite measure space and 
$p\in [1,\infty)$, then the set of quasi-interior points of $L^p(\Omega,\mu)$ is the set of all functions $f\in L^p(\Omega,\mu)$ such that $f(x)>0$ for almost all $x\in \Omega$. 

For a vector $u\in E$ and a functional $\varphi\in E'$ (where $E'$ denotes the dual space of $E$), we use the notation $u\otimes \varphi$ to denote the rank-one operator
\[
	E \ni f\mapsto (u\otimes \varphi)f:= \duality{\varphi}{f}u \in E.
\]
The functional $\varphi$ is a called strictly positive if $\duality{\varphi}{f}>0$ for all $0\lneq f\in E$ or equivalently, if the kernel of $\varphi$ contains no positive non-zero element. Let $T,S: E \to E$ be linear operators. We say that $T$ is positive and write $T\geq 0$ if $Tf\geq 0$ for all $0\leq f\in E$. Of course, every positive operator is real. Moreover, for real operators $T,S: E \to E$, the notation $T\geq S$ is used to denote $T-S\geq 0$. Analogously to the case of vectors, we write $T\succeq S$ (or $S\preceq T$) if there exists $c>0$ such that $T\geq cS$.

The space of  bounded linear operators between two complex Banach spaces $E$ and $F$ is denoted as usual by $\calL(E,F)$. In addition, we use the shorthand $\calL(E):=\calL(E,E)$. The kernel of $T\in \calL(E,F)$ will be denoted by $\ker T$, the range of $T$ by $\Ima T$, and the rank of $T$ by $\rank T$. Further notations will be introduced as and when required.

\subsection*{Organisation of the article}

In Section~\ref{domination-resolvents}, we present criteria for eventual domination of resolvents -- in addition to proving sufficient criteria, we also prove a necessary condition. 
Thereafter, in Section~\ref{domination-semigroup}, we continue the theory of eventual domination of $C_0$-semigroups from \cite{GlueckMugnolo2021}. 
Applications of the results presented in Section~\ref{domination-resolvents} and~\ref{domination-semigroup} will be given in Section~\ref{applications}. 
Finally, in Section~\ref{cesaro-means}, we demonstrate that many results about the eventual positivity of semigroups and resolvents can also be adapted to Cesàro means.

\section{Eventual domination of resolvents}
	\label{domination-resolvents}

In this section, we give conditions for the resolvent of an operator $B$ to eventually dominate the resolvent of $A$ close to the spectral bound of $B$. 
This is reminiscent of the eventual domination of semigroups studied in \cite{GlueckMugnolo2021}. For generators of $C_0$-semigroups, a characterisation for domination of resolvents is given in \cite[Proposition~C-II-4.1]{Nagel1986}.
The main results of this section are the sufficient condition for eventual domination in Theorem~\ref{thm:domination-resolvents-individual} and the necessary condition in Theorem~\ref{thm:domination:converse}.

\begin{theorem}
	\label{thm:domination-resolvents-individual}
	Let $A$ and $B$ be closed, densely defined, and real operators on a complex Banach lattice $E$, let $u \in E$ be positive, and let $\lambda_0\in\bbR$ be a spectral value of $B$ and a pole of its resolvent. Moreover, assume the following:
	\begin{enumerate}[\upshape (a)]
		\item The domination conditions $\dom{A}, \dom{B} \subseteq E_u$ are satisfied.
		
		\item The spectral projection $P$ associated with the spectral value $\lambda_0$ of $B$ satisfies $Pf\succeq u$ whenever $0\lneq f\in E$.
	\end{enumerate}
	If $\lambda_0\in \rho(A)$, then for each $0\neq f\in E$, we have
	\[
		\Res(\lambda, B)\modulus{f}- \modulus{\Res(\lambda, A)f}\succeq u \quad \text{ and }\quad \Res(\mu, B)\modulus{f}+\modulus{\Res(\mu, A)f}\preceq -u
	\]
	 for all $\lambda$ in an $f$-dependent right neighbourhood of $\lambda_0$ and for all $\mu$ in an $f$-dependent left neighbourhood of $\lambda_0$.
\end{theorem}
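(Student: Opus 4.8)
The plan is to fix an arbitrary $0\neq f\in E$ and to make a careful comparison of the two sides of each claimed inequality as the spectral parameter tends to $\lambda_0$. The guiding picture is that $\Res(\cdot,B)$ blows up at $\lambda_0$, that hypothesis~(b) forces this blow-up to occur \emph{along} $u$, and that hypothesis~(a) together with $\lambda_0\in\rho(A)$ keeps $\Res(\cdot,A)$ bounded inside the ideal $E_u$ near $\lambda_0$; the two inequalities then come out by comparing orders of magnitude, with the sign of $\tfrac{1}{\lambda-\lambda_0}$ producing the switch between domination (to the right of $\lambda_0$) and anti-domination (to the left).

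\emph{Working in $E_u$.} By~(a), the operator $\Res(\lambda,A)$ maps $E$ into $\dom A\subseteq E_u$ for every $\lambda\in\rho(A)$; since $E_u$ is a Banach space that embeds continuously into $E$, the closed graph theorem gives $\Res(\lambda,A)\in\calL(E,E_u)$, and the resolvent identity $\Res(\lambda,A)=\Res(\lambda_0,A)+(\lambda_0-\lambda)\Res(\lambda_0,A)\Res(\lambda,A)$ exhibits $\lambda\mapsto\Res(\lambda,A)$ as a holomorphic — in particular locally bounded — $\calL(E,E_u)$-valued map near $\lambda_0$. Hence there is $M_f>0$ with $\modulus{\Res(\lambda,A)f}\leq M_f\,u$ for all $\lambda$ in a fixed neighbourhood of $\lambda_0$.

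\emph{The singular part of $\Res(\cdot,B)$.} As $B$ and $\lambda_0$ are real, the spectral projection $P$ is real, and by~(b) it is positive. The next point — which I expect to be the crux — is that~(b) forces $\lambda_0$ to be a \emph{simple} pole ($P$ is a positive projection which~(b) renders irreducible, so a Perron--Frobenius-type argument shows $\Ima P$ is one-dimensional; alternatively, a pole of order two would make $(B-\lambda_0)P\modulus f$ simultaneously $\succeq u$ and $\preceq -u$, which is impossible). Granting this, $\Res(\lambda,B)=\tfrac{1}{\lambda-\lambda_0}P+R_0(\lambda)$ with $R_0$ holomorphic at $\lambda_0$; using the $B$-invariant decomposition $E=\Ima P\oplus\ker P$ one has $R_0(\lambda)=\Res(\lambda,B|_{\ker P})(\id-P)$, and since $\dom{B|_{\ker P}}\subseteq\dom B\subseteq E_u$ by~(a), the closed-graph and resolvent-identity argument above applies again, now to $B|_{\ker P}$, and yields some $N_f>0$ with $\modulus{R_0(\lambda)\modulus f}\leq N_f\,u$ near $\lambda_0$. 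Finally, $0\lneq\modulus f$, so~(b) provides $c_f>0$ with $P\modulus f\geq c_f\,u$.

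\emph{Comparing orders of magnitude.} Combining the three estimates, for $\lambda>\lambda_0$ close enough to $\lambda_0$ we get
\[
	\Res(\lambda,B)\modulus f-\modulus{\Res(\lambda,A)f}\;\geq\;\Bigl(\frac{c_f}{\lambda-\lambda_0}-N_f-M_f\Bigr)u\;\succeq\;u ,
\]
because the bracketed coefficient is eventually bounded below by a positive constant; this is the first claimed domination. For $\mu<\lambda_0$ the factor $\tfrac{1}{\mu-\lambda_0}$ is negative and tends to $-\infty$ as $\mu\uparrow\lambda_0$, so
\[
	\Res(\mu,B)\modulus f+\modulus{\Res(\mu,A)f}\;\leq\;\Bigl(\frac{c_f}{\mu-\lambda_0}+N_f+M_f\Bigr)u\;\preceq\;-u
\]
on a left neighbourhood of $\lambda_0$, which is the second. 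I expect the main obstacle to be the reduction to a simple pole — the step where positivity and the pole structure genuinely interact; the surrounding $E_u$-valued holomorphy of the two resolvent families, and the bookkeeping in the last paragraph, are routine.
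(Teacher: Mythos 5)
Your proof is correct and takes essentially the same route as the paper's: the key ingredients --- that hypothesis~(b) forces $\lambda_0$ to be a simple pole (your Perron--Frobenius argument for $\dim \Ima P = 1$, using $\Ima P \subseteq \dom{B} \subseteq E_u$, is precisely the content of the result the paper cites for this step), that the domination hypotheses plus the closed graph theorem make the regular parts of both resolvents locally bounded in $\calL(E,E_u)$, and the lower bound $P\modulus{f} \ge c_f u$ --- are identical, the only cosmetic difference being that you expand $\Res(\argument,B)$ in a Laurent series while the paper multiplies by $\lambda-\lambda_0$ and passes to the limit in $\calL(E,E_u)$. One caveat: your second suggested argument for simplicity of the pole (that a pole of order two would make $(B-\lambda_0)P\modulus{f}$ simultaneously $\succeq u$ and $\preceq -u$) does not work as stated, since assumption~(b) constrains only $P$ and not the higher Laurent coefficients, and the relation between those coefficients and the sign of $\Res(\lambda,B)\modulus{f}$ is only available once the simple-pole asymptotics are already known; rely on the first argument.
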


Observe that because the operator $A$ is densely defined, the domination assumption $\dom{A}\subseteq E_u$ in particular implies that $u$ is a quasi-interior point of $E$.

The assumption~(b) in the above theorem plays a crucial role in the abstract theory of the so-called individual (anti-)maximum principles. To understand this, let $\lambda_0$ be a spectral value of  a closed and real operator $B$ on a complex Banach lattice $E$ and let $u\in E$ be a quasi-interior point. In addition, assume that $\lambda_0$ is a pole of the resolvent  and consider the inequality
\begin{equation}
	\label{eq:anti-max}
	(\lambda-\lambda_0)\Res(\lambda, B)f\geq cu
\end{equation}
for $0\lneq f\in E$, a real number $\lambda\in \rho(B)$, and some $c>0$. If there exists a constant $c>0$ such that~\eqref{eq:anti-max} holds for all $\lambda$ in an $f$-dependent \emph{right} neighbourhood of $\lambda_0$, then, in particular, we have that $\Res(\lambda,B)f\succeq u$ for all $\lambda$ in this \emph{right} neighbourhood of $\lambda_0$. In this case, $B$ is said to satisfy a \emph{individual maximum principle at $\lambda_0$}. On the other hand, if there exists a constant $c>0$ such that~\eqref{eq:anti-max} holds for all $\lambda$ in an $f$-dependent \emph{left} neighbourhood of $\lambda_0$, then, in particular, we have that $\Res(\lambda,B)f\preceq - u$ for all $\lambda$ in this \emph{left} neighbourhood of $\lambda_0$. This time, we say that $B$ satisfies an \emph{individual (anti-)maximum principle at $\lambda_0$}. For recent contributions to the abstract theory of (anti-)maximum principles, we refer to \cite{DanersGlueckKennedy2016a,DanersGlueckKennedy2016b, DanersGlueck2017, AroraGlueck2021b, AroraGlueck2022a}. 
In particular, it was proved in \cite[Theorem~4.1]{DanersGlueck2017} that if $B$ satisfies the individual maximum or the individual anti-maximum principle at $\lambda_0$, then the spectral projection $P$ corresponding to $\lambda_0$ satisfies $Pf \succeq u$ for all $0\lneq f\in E$.  If $\dom{B}\subseteq E_u$, then a converse is also true, as will be made clear below. Before moving on, we note that a characterisation of assumption~(b) above in terms of further spectral properties can be found in \cite[Corollary~3.3]{DanersGlueckKennedy2016b}.

Because of the above remarks, before proving Theorem~\ref{thm:domination-resolvents-individual}, we first show the following:

\begin{proposition}
	\label{prop:domination-resolvents-individual}
	Let $A$ and $B$ be closed, densely defined, and real operators on a complex Banach lattice $E$ and let $u \in E$ be positive. Let $\lambda_0\in\bbR$ be a spectral value of $B$ and a pole of the resolvent $\Res(\argument, B)$. Moreover, assume that the domination condition $\dom{A} \subseteq E_u$ is satisfied and that $\lambda_0\in \rho(A)$.
	\begin{enumerate}[\upshape (i)]
		\item If for each $0\lneq f\in E$, there exists $c>0$ such that~\eqref{eq:anti-max} holds
		for all $\lambda$ in an $f$-dependent \emph{right} neighbourhood of $\lambda_0$, then
		\[
			\Res(\mu, B)\modulus{f}- \modulus{\Res(\mu, A)f}\succeq u
		\]
		for each $0\neq f\in E$ and for all $\mu$ in an $f$-dependent right neighbourhood of $\lambda_0$.
		
		\item 
		If for each $0\lneq f\in E$, there exists $c>0$ such that~\eqref{eq:anti-max} holds
		 for all $\lambda$ in an $f$-dependent \emph{left} neighbourhood of $\lambda_0$, then
		\[
			\Res(\mu, B)\modulus{f}+ \modulus{\Res(\mu, A)f} \preceq -u
		\]
		 for each $0\neq f\in E$ and for all $\mu$ in an $f$-dependent left neighbourhood of $\lambda_0$.
	\end{enumerate}
\end{proposition}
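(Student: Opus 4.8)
The plan is to exploit the asymmetry of the two resolvents near $\lambda_0$: since $\lambda_0\in\rho(A)$ and $\dom{A}\subseteq E_u$, the resolvent of $A$ stays \emph{uniformly} dominated by $u$ as $\mu\to\lambda_0$, whereas the hypothesis in~(i) (resp.~(ii)) forces $\Res(\mu,B)\modulus{f}$ to blow up in the $u$-direction with rate $\tfrac{c}{\mu-\lambda_0}$. So for $\mu$ sufficiently close to $\lambda_0$ from the relevant side the $B$-term outweighs the (uniformly bounded) $A$-term, which is exactly the assertion. Concretely, I would first establish the uniform domination estimate: there are $\delta>0$ and $N>0$ with $\modulus{\Res(\mu,A)g}\le N\norm{g}\,u$ for all $g\in E$ and all real $\mu$ with $\modulus{\mu-\lambda_0}\le\delta$.

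Proving this estimate is the bulk of the work and the step I expect to be the only genuine obstacle. Here I would argue as follows. Because $\lambda_0\in\rho(A)$ and $\dom{A}\subseteq E_u$, the operator $\Res(\lambda_0,A)$ maps all of $E$ into $E_u$. The inclusion $E_u\hookrightarrow E$ is continuous (from $\modulus{x}\le\norm{x}_u\,u$ one reads off $\norm{x}\le\norm{u}\,\norm{x}_u$), hence the graph of $\Res(\lambda_0,A)\colon E\to E_u$ is closed, and the closed graph theorem gives $\Res(\lambda_0,A)\in\calL(E,E_u)$. To pass to nearby $\mu$, I would invoke the resolvent identity in the form $\Res(\mu,A)=\Res(\lambda_0,A)\bigl(\id+(\lambda_0-\mu)\Res(\mu,A)\bigr)$: continuity of $\mu\mapsto\Res(\mu,A)$ on the open set $\rho(A)\ni\lambda_0$ lets me fix $\delta>0$ so small that $\modulus{\mu-\lambda_0}\le\delta$ forces $\mu\in\rho(A)$ and $\norm{\id+(\lambda_0-\mu)\Res(\mu,A)}_{\calL(E)}\le 2$, whence $\norm{\Res(\mu,A)}_{\calL(E,E_u)}\le 2\norm{\Res(\lambda_0,A)}_{\calL(E,E_u)}=:N$; unravelling the definition of the gauge norm then yields $\modulus{\Res(\mu,A)g}\le N\norm{g}\,u$.

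With the estimate in hand the rest is routine. For~(i) I would fix $0\neq f\in E$, note $0\lneq\modulus{f}$, and use the hypothesis to pick $c>0$ and $r>0$ with $\Res(\mu,B)\modulus{f}\ge\tfrac{c}{\mu-\lambda_0}\,u$ on $(\lambda_0,\lambda_0+r)$; then for $\mu$ in the $f$-dependent right neighbourhood $(\lambda_0,\lambda_0+\delta')$, where $\delta':=\min\{\delta,r,\tfrac{c}{2N\norm{f}}\}$, the uniform estimate gives $\Res(\mu,B)\modulus{f}-\modulus{\Res(\mu,A)f}\ge\bigl(\tfrac{c}{\mu-\lambda_0}-N\norm{f}\bigr)u\ge\tfrac{c}{2(\mu-\lambda_0)}\,u\ge\tfrac{c}{2\delta'}\,u$, i.e.\ $\succeq u$. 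Part~(ii) is the mirror image: now $\mu-\lambda_0<0$, so dividing $(\mu-\lambda_0)\Res(\mu,B)\modulus{f}\ge cu$ by $\mu-\lambda_0$ reverses the inequality, and combining $\Res(\mu,B)\modulus{f}\le\tfrac{c}{\mu-\lambda_0}\,u$ with the uniform estimate yields $\Res(\mu,B)\modulus{f}+\modulus{\Res(\mu,A)f}\le\bigl(\tfrac{c}{\mu-\lambda_0}+N\norm{f}\bigr)u\preceq -u$ on an $f$-dependent left neighbourhood of $\lambda_0$. Along the way one only uses the pole hypothesis to guarantee that $\Res(\mu,B)$ is defined on punctured one-sided real neighbourhoods of $\lambda_0$, so that the hypotheses are meaningful, and realness of $B$ to ensure $\Res(\mu,B)\modulus{f}\in E_\bbR$, so that the displayed order inequalities make sense.
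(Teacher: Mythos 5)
Your proof is correct and follows essentially the same route as the paper: both rest on the observation that, since $\lambda_0\in\rho(A)$ and $\dom{A}\subseteq E_u$, the closed graph theorem makes $\Res(\mu,A)$ bounded into $E_u$ near $\lambda_0$ (so $(\mu-\lambda_0)\modulus{\Res(\mu,A)f}$ is eventually $\le\tfrac{c}{2}u$), while the hypothesis forces $(\mu-\lambda_0)\Res(\mu,B)\modulus{f}\ge cu$, and subtracting wins. The only cosmetic difference is that you obtain the $A$-estimate uniformly in $g$ via the resolvent identity, whereas the paper isolates the statement $(\lambda-\lambda_0)\Res(\lambda,A)\to 0$ in $\calL(E,E_u)$ as a separate lemma and handles (ii) by a change of sign rather than by a mirrored computation.
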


The following lemma contains an observation used in the proof of Proposition~\ref{prop:domination-resolvents-individual} below. As these arguments appear frequently in the theory of\ (anti-)maximum principles, we choose to state them separately.

\begin{lemma}
	\label{lem:domination-convergence}
	Let $A$ 
	be a closed linear operator on a complex Banach lattice $E$ and let $u \ge 0$ be a quasi-interior point of $E$ such that the domination condition $\dom{A} \subseteq E_u$ is satisfied. Let $\lambda_0 \in \bbC$.
	\begin{enumerate}[\upshape (i)]
		\item If $\lambda_0$ lies in the resolvent set of $A$, then $(\lambda-\lambda_0)\Res(\lambda, A) \to 0$ in $\calL(E, E_u)$ as $\lambda \to \lambda_0$.
		
		\item If $\lambda_0$ is a simple pole of the resolvent $\Res(\argument,A)$ and the corresponding spectral projection is denoted by $P$, then $(\lambda-\lambda_0)\Res(\lambda, A) \to P$ in $\calL(E, E_u)$ as $\lambda \to \lambda_0$.
	\end{enumerate}
\end{lemma}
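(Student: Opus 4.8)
The plan is to prove both statements by exploiting the closed graph theorem to upgrade norm estimates on $E$ to norm estimates on $E_u$. The key structural fact is this: if $T \in \calL(E)$ is a bounded operator with $\Ima T \subseteq E_u$, then $T$, regarded as a map $E \to E_u$, is automatically bounded. Indeed, $E_u$ carries the gauge norm $\norm{\cdot}_u$, under which it is a Banach lattice, and the inclusion $E_u \hookrightarrow E$ is continuous (since $\modulus{x} \le \norm{x}_u\, u$ implies $\norm{x} \le \norm{x}_u \norm{u}$). Hence if $x_n \to x$ in $E$ and $T x_n \to y$ in $E_u$, then $Tx_n \to y$ in $E$ as well, while $Tx_n \to Tx$ in $E$ by boundedness of $T$ on $E$; so $Tx = y$, the graph of $T\colon E \to E_u$ is closed, and $T \in \calL(E, E_u)$.

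For part~(i), since $\lambda_0 \in \resSet(A)$, the map $\lambda \mapsto \Res(\lambda, A)$ is analytic (in operator norm on $\calL(E)$) in a neighbourhood of $\lambda_0$, and $\dom{A} = \Ima \Res(\lambda, A) \subseteq E_u$ by hypothesis. By the observation above, each $\Res(\lambda, A)$ belongs to $\calL(E, E_u)$. The issue is that analyticity a priori only holds in $\calL(E)$, not in $\calL(E, E_u)$; I would argue as follows. Fix some $\nu \in \resSet(A)$ near $\lambda_0$. The resolvent identity gives $\Res(\lambda, A) = \Res(\nu, A) + (\nu - \lambda)\Res(\lambda, A)\Res(\nu, A)$, and more usefully $\Res(\lambda, A) = \Res(\nu, A)\bigl(\id + (\nu - \lambda)\Res(\lambda, A)\bigr)$. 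Since $\Res(\nu, A) \in \calL(E, E_u)$ is a fixed bounded operator and $\lambda \mapsto \id + (\nu - \lambda)\Res(\lambda, A)$ is analytic in $\calL(E)$ near $\lambda_0$, the composition $\lambda \mapsto \Res(\lambda, A)$ is analytic as a map into $\calL(E, E_u)$; in particular it is continuous there. Therefore $(\lambda - \lambda_0)\Res(\lambda, A) \to 0 \cdot \Res(\lambda_0, A) = 0$ in $\calL(E, E_u)$ as $\lambda \to \lambda_0$.

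For part~(ii), I would combine part~(i) with the Laurent expansion of the resolvent at a simple pole. Write $\Res(\lambda, A) = \frac{1}{\lambda - \lambda_0} P + R(\lambda)$ where $P$ is the spectral projection and $R$ is analytic on a punctured neighbourhood of $\lambda_0$ with a removable singularity there, i.e. $R$ extends analytically across $\lambda_0$ (this is exactly what "simple pole" means). Since $\Ima P \subseteq \Ima \Res(\lambda, A) \subseteq E_u$ (the range of the spectral projection of a pole is contained in $\dom{A^k}$ for all $k$, in particular in $\dom{A} \subseteq E_u$), we have $P \in \calL(E, E_u)$ by the closed-graph observation. Then $R(\lambda) = \Res(\lambda, A) - \frac{1}{\lambda-\lambda_0}P$ maps $E$ into $E_u$ for $\lambda \neq \lambda_0$, and applying the argument of part~(i) to $R$ — or simply noting $(\lambda - \lambda_0)\Res(\lambda,A) = P + (\lambda - \lambda_0)R(\lambda)$ and that $(\lambda-\lambda_0)R(\lambda) \to 0$ in $\calL(E,E_u)$ by the same compose-with-a-fixed-$\calL(E,E_u)$-operator trick applied to $R$ (using $R(\lambda) = \Res(\nu,A)(\cdots) - \tfrac{1}{\lambda-\lambda_0}P$, wait—better: $R$ is itself analytic in $\calL(E)$ near $\lambda_0$ and $R(\lambda) = \Res(\lambda,A) - \tfrac{1}{\lambda-\lambda_0}P = \Res(\nu,A)(\id+(\nu-\lambda)\Res(\lambda,A)) - \tfrac{1}{\lambda-\lambda_0}P$) gives continuity of $R$ into $\calL(E,E_u)$ at $\lambda_0$) — yields $(\lambda - \lambda_0)\Res(\lambda, A) \to P$ in $\calL(E, E_u)$.

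The main obstacle is the one flagged above: promoting the known operator-norm analyticity/continuity of the resolvent from $\calL(E)$ to $\calL(E, E_u)$. The closed graph theorem handles membership in $\calL(E, E_u)$ pointwise in $\lambda$, but continuity in $\lambda$ requires the factorisation through the single fixed operator $\Res(\nu, A) \in \calL(E, E_u)$, since one cannot differentiate the closed graph theorem. Once that factorisation is in place the rest is bookkeeping with the Laurent expansion.
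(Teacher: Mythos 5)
Your overall strategy is sound and genuinely different from the paper's. The paper routes everything through the graph norm: for (ii) it quotes a known result that $(\lambda-\lambda_0)\Res(\lambda,A)\to P$ in $\calL(E,\dom{A})$, for (i) it upgrades $\calL(E)$-convergence to $\calL(E,\dom{A})$-convergence via the identity $A\Res(\lambda,A)=\lambda\Res(\lambda,A)-I$, and in both cases it finishes with the continuous embedding $\dom{A}\hookrightarrow E_u$ supplied by the closed graph theorem. You instead factor through the single fixed operator $\Res(\nu,A)\in\calL(E,E_u)$ using the resolvent identity. Your closed-graph observation is correct, and your part (i) is complete.

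Part (ii), however, has a gap at exactly the point you flagged with ``wait---better''. Writing $R(\lambda)=\Res(\nu,A)(\id+(\nu-\lambda)\Res(\lambda,A))-\frac{1}{\lambda-\lambda_0}P$ does not by itself give continuity of $R$ into $\calL(E,E_u)$ at $\lambda_0$: both summands blow up as $\lambda\to\lambda_0$ (the factor $\id+(\nu-\lambda)\Res(\lambda,A)$ is unbounded in $\calL(E)$ near the pole), and the compose-with-$\Res(\nu,A)$ trick only controls the first summand. To close the gap you must absorb the singular term into the factorisation, and this is precisely where simplicity of the pole enters: since $AP=\lambda_0 P$, one has $P=\Res(\nu,A)(\nu-A)P=(\nu-\lambda_0)\Res(\nu,A)P$, hence $R(\lambda)=\Res(\nu,A)\bigl[\id-P+(\nu-\lambda)R(\lambda)\bigr]$ with the bracket analytic in $\calL(E)$ near $\lambda_0$. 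Equivalently and more directly, $(\lambda-\lambda_0)\Res(\lambda,A)=\Res(\nu,A)\bigl[(\lambda-\lambda_0)\id+(\nu-\lambda)(\lambda-\lambda_0)\Res(\lambda,A)\bigr]\to\Res(\nu,A)(\nu-\lambda_0)P=P$ in $\calL(E,E_u)$, using that $(\lambda-\lambda_0)\Res(\lambda,A)\to P$ in $\calL(E)$ at a simple pole. With either patch your proof is complete; the paper's graph-norm route avoids this bookkeeping by citing the convergence in $\calL(E,\dom{A})$ from the literature.
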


\begin{proof}
	We assume without loss of generality that $\lambda_0=0$.
	
	(ii) We know from \cite[Lemma~4.7(i)]{DanersGlueckKennedy2016b} that $\lambda\Res(\lambda,A)\to P$ in $\calL(E, \dom{A})$ as $\lambda\to 0$ (where $\dom{A}$ is endowed with the graph norm). Moreover, it follows from the domination assumption and the closed graph theorem that $\dom{A}$ embeds continuously into $E_u$, which implies the assertion.
	
	(i) Firstly, $0\in \rho(A)$ implies that there is a neighbourhood of $0$ which contains no spectral value of $A$ and $\Res(\lambda, A) \to \Res(0,A)$ in $\calL(E)$ as $\lambda \to 0$. Now,
	\[
		A\Res(\lambda,A)=\lambda \Res(\lambda, A) - I \to - I =A\Res(0,A)
	\]
	in $\calL(E)$ as $\lambda \to 0$. Thus $\Res(\lambda, A) \to \Res(0,A)$ in $\calL(E, \dom{A})$ as $\lambda \to 0$. The result follows as in (ii) above.
\end{proof}

\begin{proof}[Proof of Proposition~\ref{prop:domination-resolvents-individual}]
	By a change of signs, we only need to prove (i). To this end, assume without loss of generality that $\lambda_0=0$ and 
	fix a non-zero vector $f\in E$. By assumption there exist a constant $c>0$ and a number $\delta_1>0$ such that $\lambda\Res(\lambda, B)\modulus{f} \geq c u$ whenever $\lambda \in (0,\delta_1)$.
	 Furthermore, by Lemma~\ref{lem:domination-convergence}(i), there exists $\delta_2>0$ such that $\lambda \modulus{\Res(\lambda, A)f} \leq c/2 u$ for all $\lambda \in (0,\delta_2)$. 
	 Letting $\delta =\min\{\delta_1,\delta_2\}>0$, we obtain
	\[
		\lambda\big(\Res(\lambda, B)\modulus{f}- \modulus{\Res(\lambda,A)f}\big)\geq cu-\frac{c}{2} u=\frac{c}{2} u
	\]
	for all $\lambda \in (0,\delta)$, from which the result follows.
\end{proof}

To prove Theorem~\ref{thm:domination-resolvents-individual} using Proposition~\ref{prop:domination-resolvents-individual}, we will show that assumption (b) implies that~\eqref{eq:anti-max} holds in some neighbourhood of $\lambda_0$. 
Essentially, this has already been shown in \cite[Theorem~4.4]{DanersGlueckKennedy2016b}, but the independence of the constant $c$ of $\lambda$ was not explicitly mentioned there.
We state this here in a separate and general lemma, for the sake of easier reference.

\begin{lemma}
	\label{lem:convergence-strong-positivity}
	Let $u \ge 0$ be a quasi-interior point in a complex Banach lattice $E$ and let $(T_{\alpha})_{\alpha\in \calA}$ be a net of bounded and real linear operators in $\calL(E)$ whose ranges are contained in $E_u$. 
	Assume that $(T_{\alpha})_{\alpha\in \calA}$ converges strongly in $\calL(E, E_u)$ to an operator $T \in \calL(E, E_u)$.
 	
	Fix $0\lneq f\in E$. If there exists $c>0$ such that $Tf\geq cu$, then for each $\epsilon\in (0,c)$, there exists $\alpha_0\in \calA$ such that $T_{\alpha}f\geq \epsilon u$ for all $\alpha\geq \alpha_0$.
\end{lemma}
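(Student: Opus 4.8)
The plan is to transfer the inequality $Tf \ge cu$ to the approximating operators $T_\alpha$ by exploiting strong convergence *in the norm of $E_u$*, together with the fact that in $E_u$ the order interval structure is governed by the gauge norm $\norm{\,\cdot\,}_u$ and that $u$ is an order unit of $E_u$. Concretely, the point is that $\norm{\,\cdot\,}_u$-convergence is precisely convergence with respect to the order unit $u$, so a vector that is close to $cu$ in $\norm{\,\cdot\,}_u$ must dominate a slightly smaller multiple of $u$.

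First I would fix $0 \lneq f \in E$, assume $Tf \ge cu$, and fix $\epsilon \in (0,c)$. Set $g := Tf \in E_u$ and $g_\alpha := T_\alpha f \in E_u$. By the assumed strong convergence $T_\alpha \to T$ in $\calL(E,E_u)$, we have $\norm{g_\alpha - g}_u \to 0$; hence there is $\alpha_0 \in \calA$ with $\norm{g_\alpha - g}_u \le c - \epsilon$ for all $\alpha \ge \alpha_0$. Unravelling the definition of the gauge norm, $\norm{g_\alpha - g}_u \le c-\epsilon$ means $\modulus{g_\alpha - g} \le (c-\epsilon)u$, and in particular $g - g_\alpha \le \modulus{g_\alpha - g} \le (c-\epsilon)u$, i.e. $g_\alpha \ge g - (c-\epsilon)u$. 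Combining with $g = Tf \ge cu$ gives
\[
	T_\alpha f = g_\alpha \ge cu - (c-\epsilon)u = \epsilon u
\]
for all $\alpha \ge \alpha_0$, which is the claim. (Here I have implicitly used that the inclusion $E_u \hookrightarrow E$ is a lattice homomorphism, so that the modulus and order computed in $E_u$ agree with those in $E$; this is standard for principal ideals.)

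I do not expect a genuine obstacle here — the lemma is essentially a bookkeeping statement about the gauge norm — but the one point that needs care is making sure the inequality $\modulus{g_\alpha - g} \le (c-\epsilon) u$ is read off correctly from $\norm{g_\alpha - g}_u \le c-\epsilon$: the infimum in the definition of $\norm{\,\cdot\,}_u$ is attained in the sense that $\modulus{x} \le \norm{x}_u\, u$ always holds, because $\{c > 0 : \modulus{x} \le cu\}$ is closed upwards and the order intervals $[-cu,cu]$ are closed (as $E$ is a Banach lattice, hence has closed positive cone). Once this is noted, the estimate above is immediate, and the realness/positivity hypotheses on the $T_\alpha$ are not even needed beyond ensuring the objects live in the real part where the order makes sense.
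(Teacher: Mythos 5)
Your proof is correct and follows essentially the same route as the paper's: pass from strong convergence in $\calL(E,E_u)$ to the gauge-norm estimate $\modulus{T_\alpha f - Tf}\le (c-\epsilon)u$, then use realness of the $T_\alpha$ (so that $T_\alpha f - Tf$ is a real vector and the order inequality makes sense) to conclude $T_\alpha f \ge Tf - (c-\epsilon)u \ge \epsilon u$. Your additional remark justifying $\modulus{x}\le \norm{x}_u\,u$ via closedness of order intervals is a correct filling-in of a step the paper leaves implicit.
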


\begin{proof}
	Let $\epsilon\in (0,c)$.
	Because $(T_{\alpha})_{\alpha\in \calA}$ converges strongly to $T$ in $\calL(E, E_u)$, there exists $\alpha_0\in \calA$ such that
	\[
		\norm{T_\alpha f - Tf}_u \leq c-\epsilon
	\]
	and in turn, $\modulus{T_\alpha f - Tf} \leq (c-\epsilon)u$
	for all $\alpha\geq \alpha_0$. Since the operators $T_{\alpha}$ are all real, we can write
	\[
		T_{\alpha}f\geq Tf -(c-\epsilon) u\geq cu-(c-\epsilon) u = \epsilon u
	\]
	for all $\alpha\geq \alpha_0$.
\end{proof}

\begin{proof}[Proof of Theorem~\ref{thm:domination-resolvents-individual}]
	Without loss of generality, let $\lambda_0=0$. Using the necessary condition for our assumption (b) given in \cite[Proposition~3.1]{DanersGlueckKennedy2016b}, we get that the pole order of the spectral value $0$ of $B$ is one. 
	Thus, it follows from assumption~(a) and Lemma~\ref{lem:domination-convergence}(ii) that $\lambda \Res(\lambda, B)\to P$ in $\calL(E, E_u)$ as $\lambda \to 0$. 
		
	Fix $0\lneq f\in E$. Using assumption~(b) along with Lemma~\ref{lem:convergence-strong-positivity}, we see that~\eqref{eq:anti-max} holds for all $\lambda$ in some ($f$-dependent) neighbourhood of $\lambda_0$. The conclusion now follows from Proposition~\ref{prop:domination-resolvents-individual}(ii).
\end{proof}

We now prove a converse to Theorem~\ref{thm:domination-resolvents-individual} and Proposition~\ref{prop:domination-resolvents-individual} in the spirit of \cite[Theorem~3.1]{GlueckMugnolo2021}. 
The point of the result is that, under appropriate technical assumptions, we cannot have both eventual domination and eventual positivity of the resolvents of $A$ and $B$ on the right of an eigenvalue $\lambda_0$ of $B$ if $\lambda_0$ is also an eigenvalue of $A$ (except in the trivial case where $A = B$).

As a consequence, we are able to disprove the eventual domination of resolvents for odd order operators in Section~\ref{sec:application-odd-order}.

\begin{theorem}
	\label{thm:domination:converse}
	Let $A$ and $B$ be closed, densely defined, and real operators on a complex Banach lattice $E$ and let $u \ge 0$ be a quasi-interior point of $E$. Let $\lambda_0\in\bbR$ be a spectral value of $B$ and a pole of the resolvent $\Res(\argument, B)$. Moreover, assume the following:
	\begin{enumerate}[\upshape (a)]
		\item For each $0\leq f\in E$, we have $0\leq \Res(\lambda, A)f\leq \Res(\lambda,B)f$ for all $\lambda$ in an $f$-dependent right neighbourhood of $\lambda_0$.
		
		\item For each $0\lneq f\in E$, we have $\Res(\lambda, B)f\succeq u$ for all $\lambda$ in an $f$-dependent right neighbourhood of $\lambda_0$.
	\end{enumerate}
	If $\lambda_0$ is, in addition, a spectral value of $A$ and a pole of the resolvent $\Res(\argument,A)$, then $A=B$.
\end{theorem}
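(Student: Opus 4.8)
The plan is to show that assumptions (a) and (b), together with the pole conditions, force $\Res(\argument,A)$ and $\Res(\argument,B)$ to agree on a punctured neighbourhood of $\lambda_0$, after which the identity theorem for the resolvent gives $A=B$; morally this is a Perron--Frobenius-type rigidity statement applied to the resolvents regarded as bounded operators. Assume without loss of generality that $\lambda_0=0$. Let $m$, resp.\ $m'$, be the pole order of $\Res(\argument,B)$, resp.\ $\Res(\argument,A)$, at $0$, and $P$, resp.\ $Q$, the associated spectral projections. Multiplying the inequality in (a) by suitable powers of $\lambda$ and letting $\lambda\downarrow 0$ (which eventually enters every $f$-dependent right neighbourhood, so the ``individual'' nature of (a) is harmless here, the positive cone being closed) shows $m'\le m$ and that the leading Laurent coefficients $U_B:=\lim_{\lambda\downarrow 0}\lambda^m\Res(\lambda,B)=B^{m-1}P$ and $U_A:=\lim_{\lambda\downarrow 0}\lambda^{m'}\Res(\lambda,A)=A^{m'-1}Q$ are positive operators (limits of positive operators along $\lambda\downarrow 0$). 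Since $\Ima U_B\subseteq\ker B$, any $0\lneq g$ with $v:=U_B g\gneq 0$ yields a positive $v\in\ker B$, so $\Res(\lambda,B)v=\lambda^{-1}v$; applying (b) to $v$ gives $\lambda^{-1}v\succeq u$, hence $v\succeq u$, i.e.\ $v$ is quasi-interior. More generally every $0\lneq h\in\ker B$ is quasi-interior.

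Next I would show $m=m'=1$. If $m\ge 2$ then $U_B^2=B^{2m-2}P=0$ (because $2m-2\ge m$ and $B^m P=0$); picking a strictly positive $\varphi_0\in E'$, the functional $\phi:=U_B'\varphi_0$ lies in $\ker B'$, is positive, and is non-zero (test against an $f\ge 0$ with $U_B f\gneq 0$), yet $\langle\phi,v\rangle=\langle\varphi_0,U_B^2 g\rangle=0$, which is impossible for a non-zero positive functional paired with a quasi-interior point. Hence $m=1$, so $m'=1$ as well; consequently $\Ima P=\ker B$, $\Ima Q=\ker A$, and $P=\lim_{\lambda\downarrow 0}\lambda\Res(\lambda,B)\ge 0$, $Q=\lim_{\lambda\downarrow 0}\lambda\Res(\lambda,A)\ge 0$ are positive projections. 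Testing (a) on positive $f$ with $Pf=0$ — where $\Res(\argument,B)f$ is regular at $0$, so $\lambda\Res(\lambda,A)f$ stays bounded and thus tends to $Qf=0$ — and propagating this to all of $\ker P$, one obtains $\ker P\subseteq\ker Q$, i.e.\ $Q=QP$.

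For the rigidity step, note that since $0$ is isolated in both $\sigma(A)$ and $\sigma(B)$, for small $\lambda>0$ one has $\spr(\Res(\lambda,A))=1/\dist(\lambda,\sigma(A))=1/\lambda=1/\dist(\lambda,\sigma(B))=\spr(\Res(\lambda,B))$, and this common value is a (simple) pole of the resolvent of the respective bounded operator $\Res(\lambda,\argument)$, with spectral projection $Q$, resp.\ $P$. Because $\Res(\lambda,B)$ maps $E_+\setminus\{0\}$ into quasi-interior points (via the quasi-interior elements of $\ker B$ furnished by (b)), it is irreducible, so the Perron--Frobenius/de~Pagter machinery yields that $\ker B$ is one-dimensional, $P=v\otimes\phi$ with $\phi\in E'$ strictly positive and $\phi\in\ker B'$; combined with $Q=QP$ this forces $\ker A$ to be one-dimensional, $Q=w\otimes\phi$ for some positive $w\in\ker A$, and in particular $\phi\in\ker A'$. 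Consequently $\langle\phi,\Res(\lambda,B)f-\Res(\lambda,A)f\rangle=\lambda^{-1}\langle\phi,f\rangle-\lambda^{-1}\langle\phi,f\rangle=0$ for every $f\in E$, while $\Res(\lambda,B)f-\Res(\lambda,A)f\ge 0$ for $0\le f$ and small $\lambda>0$; strict positivity of $\phi$ then gives $\Res(\lambda,A)f=\Res(\lambda,B)f$ for all such $f$, hence for all $f\in E$ by linearity, and for all sufficiently small $\lambda>0$. As both resolvents are analytic on the connected set $\rho(A)\cap\rho(B)$, they coincide there, and therefore $A=B$.

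I expect the main obstacle to lie in this last step: showing that the strong-positivity hypothesis (b) — given only ``individually'', on $f$-dependent right neighbourhoods of $\lambda_0$ — together with the pole condition genuinely forces the (positive) spectral projection $P$ to be rank one with strictly positive coordinate functional, which is what makes the Perron--Frobenius rigidity usable. Here one must be careful about the interplay between the individual hypotheses and the uniform, operator-level statements the classical irreducibility theory requires; passing to the eigenvalue $1/\lambda$ of the bounded operator $\Res(\lambda,B)$ and to limits $\lambda\downarrow\lambda_0$ is what bridges the gap.
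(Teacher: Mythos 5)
Your overall strategy---reduce both spectral projections to rank one with a \emph{common} strictly positive functional $\phi$, pair the domination inequality with $\phi$ to force $\Res(\lambda,A)f=\Res(\lambda,B)f$, and finish with the identity theorem---is exactly the paper's endgame, and several of your intermediate observations (positivity of the leading Laurent coefficients, quasi-interiority of the positive non-zero elements of $\ker B$, simplicity of the pole of $\Res(\argument,B)$) are sound. But two steps in the middle do not hold up. The most serious one is the claim $\ker P\subseteq\ker Q$, i.e.\ $Q=QP$. You obtain it by testing assumption (a) on positive $f$ with $Pf=0$ and then ``propagating to all of $\ker P$''; however, once $P=v\otimes\phi$ with $\phi$ strictly positive, the only positive $f$ with $Pf=0$ is $f=0$, so your test set is trivial and $\ker P$ is certainly not spanned by its positive elements. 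Since $Q=QP$ is precisely what identifies the dual eigenvector of $A$ with that of $B$ (and also what you use to make $Q$ rank one), this is the crux of the rigidity argument and is missing. The paper proceeds differently: from $0\le Q\le P$ it gets $\rank Q=1$ via \cite[Proposition~2.1.3]{Emelyanov2007}, writes $Q=w\otimes\phi$ with $w,\phi\ge 0$ normalised, and then proves $\phi=\psi$ by the duality computation $\phi=Q'\phi\le P'\phi=\duality{\phi}{v}\psi$ combined with $\duality{\duality{\phi}{v}\psi-\phi}{v}=0$ and the quasi-interiority of $v$.

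The second gap is the irreducibility step. Assumption (b) is an \emph{individual} statement (the right neighbourhood depends on $f$), so for no fixed $\lambda$ do you know that $\Res(\lambda,B)$ is a positive operator, let alone that it maps every positive non-zero vector to a quasi-interior point; the classical Perron--Frobenius/de~Pagter machinery therefore cannot be applied to a fixed resolvent operator. You flag this worry yourself at the end, but it is not resolved in the argument. The paper sidesteps it by working only with the single positive operator $P$ and invoking \cite[Theorem~4.1]{DanersGlueck2017} together with \cite[Corollary~3.3]{DanersGlueckKennedy2016b}, which deliver exactly $P=v\otimes\psi$ with $v\succeq u$ and $\psi$ strictly positive. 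A smaller, repairable issue: your proof that the pole order of $B$ is one picks a strictly positive functional $\varphi_0\in E'$, which need not exist on a general Banach lattice; a cleaner route is to note that $U_B^2=0$ would make the positive operator $U_B$ annihilate the quasi-interior point $U_Bg$, forcing $U_B=0$ on the dense ideal it generates and hence everywhere, a contradiction.
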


\begin{proof}
	There is no loss of generality in assuming $\lambda_0=0$. Denote by $P$ the spectral projection associated with the spectral value $0$ of $B$. 
	The eventual positivity assumption in (b) and \cite[Theorem~4.1]{DanersGlueck2017} together imply that $Pf\succeq u$ whenever $0\lneq f\in E$. We can thus apply \cite[Corollary~3.3]{DanersGlueckKennedy2016b} to obtain that $\ker B$ is spanned by a vector $v\succeq u $, the eigenspace $\ker B'$ contains a strictly positive functional $\psi$, and $0$ is also a simple pole of the resolvent $\Res(\argument, B)$. The latter implies that $\Ima P=\ker B$ and $\Ima P'=\ker B'$. 
	We may rescale $\psi$ to have norm $1$. 
	Furthermore, we rescale $v$ such that $\duality{\psi}{v}=1$, and thus obtain  $P=v\otimes \psi$.
	
	Next, let $Q$ denote the spectral projection of $A$ associated with the pole $0$. 
	As $0$ is a simple pole of $\Res(\argument,B)$, we conclude from the eventual domination assumption in~(a) that $0$ is also a simple pole of $\Res(\argument,A)$.
	So, $\lambda \Res(\lambda,A)\to Q$ in $\calL(E)$ as $\lambda \to 0$. 
	Moreover, the eventual positivity assumption on $\Res(\argument,A)$ in~(a) implies that $Q\geq 0$ and the eventual domination in assumption (a) implies that $P\geq Q\geq 0$. We can thus infer from \cite[Proposition~2.1.3]{Emelyanov2007} that $1 = \rank P \ge \rank Q > 0$, so $\rank Q = 1$.
	Thus there exists $w\in W$ and $\phi \in E'$ such that $Q=w\otimes \phi$; here, we may choose $\phi$ to have norm $1$. 
	Moreover, due to the positivity of $Q$, we may choose both $w$ and $\phi$ to be positive. 
	Note that $\duality{\phi}{w}=1$, because $Q$ is a projection.
	
	Next, we prove that $\psi=\phi$. To this end, observe that $\phi=Q'\phi\leq P'\phi= \duality{\phi}{v}\psi$. Since
	\[
		\duality{\duality{\phi}{v}\psi-\phi}{v}=0
	\]
	and $v$ is a quasi-interior point of $E$, it follows that $\duality{\varphi}{v}\psi=\phi$; see \cite[Theorem~II.6.3]{Schaefer1974}. 
	As $\psi$ and $\phi$ are both positive functionals of norm $1$, this implies that $\psi=\phi$.
	
	Now consider a vector $0\lneq f\in E$ and let $\delta>0$ be such that $\Res(\lambda,B)f \geq \Res(\lambda,A)f$ for all $\lambda \in (0,\delta)$. Then, for $\lambda \in (0,\delta)$, the computation
	\begin{align*}
		\duality{\psi}{\Res(\lambda,B)f-\Res(\lambda,A)f}
		&= 
		\duality{\Res(\lambda,B')\psi}{f}-\duality{\Res(\lambda,A')\phi}{f} \\
		&= 
		\frac1{\lambda} [\duality{\psi}{f}-\duality{\phi}{f}]
		=
		0
	\end{align*}
	and the strict positivity of $\psi$ imply that $\Res(\lambda,B)f=\Res(\lambda,A)f$ for all $\lambda \in (0,\delta)$. 
	Since the resolvent is analytic, the identity theorem for analytic functions yields that the same equality holds for all $\lambda$ in a non-empty $f$-independent set $U \subseteq \bbC$.
	Hence, $\Res(\argument,A)=\Res(\argument,B)$ for all $\lambda \in U$, and we thus conclude that $A=B$, as desired.
\end{proof}

\section{Eventual domination of semigroups}
	\label{domination-semigroup}

We now consider the case of $C_0$-semigroups rather than resolvents.
Eventual domination for $C_0$-semigroups was studied in \cite{GlueckMugnolo2021}; 
in this section, we provide a number of refinements and improvements to the results given there.
Our first result, Theorem~\ref{thm:domination-semigroups-individual}, extracts sufficient conditions for eventual domination from \cite[Theorem~3.1]{GlueckMugnolo2021}, optimises the assumptions of the aforementioned theorem, and further improves the conclusion.
In particular, in contrast to \cite[Theorem~3.1]{GlueckMugnolo2021}, we do not need the dominated semigroup to be eventually positive, and we also consider non-positive initial values $f$ (the latter is made possible by the inclusion of the modulus in the eventual domination estimate).

To state the theorem, we recall that for a closed operator $A$ on a Banach space $E$, its \emph{spectral bound} is defined as
\[
	\spb(A):= \sup\{\re \lambda: \lambda \in\spec(A)\}\in [-\infty,\infty].
\]
If $A$ generates a $C_0$-semigroup, then $\spb(A) < \infty$.

For two $C_0$-semigroups $(e^{tA})_{t\geq 0}$ and $(e^{tB})_{t\geq 0}$ on a complex Banach lattice $E$ we say that $(e^{tA})_{t\geq 0}$ \emph{individually eventually dominates} $(e^{tB})_{t\geq 0}$ if for each $f \in E$ there exists a time $t_0 \ge 0$ such that
\[
	\modulus{e^{tA}f} \le e^{tB} \modulus{f}
\]
for all $t \ge t_0$.

\begin{theorem}
	\label{thm:domination-semigroups-individual}
	Let $(e^{tA})_{t\geq 0}$ and $(e^{tB})_{t\geq 0}$ be real $C_0$-semigroups on a complex Banach lattice $E$ and let $u \ge 0$ be a quasi-interior point of $E$. In addition, assume the following: 
	\begin{enumerate}[\upshape (a)]
		\item 
		There exist  $t_1,t_2 \geq 0$ such that $e^{t_1 A}E, e^{t_2 B}E\subseteq E_u$.

		\item 
		The re-scaled semigroup $(e^{t(B-\spb(B))})_{t\geq 0}$ converges strongly as $t\to\infty$ to an operator $P$ that satisfies $Pf\succeq u$ whenever $0\lneq f\in E$.
		
		\item 
		The rescaled semigroup $(e^{t(A-\spb(B))})_{t\geq 0}$ converges strongly to $0$ as $t\to\infty$.
	\end{enumerate}
	Then for each non-zero $f\in E$, there exists a time $\tau\geq 0$ such that $e^{tB}\modulus{f}-\modulus{e^{tA}f}\succeq u$ for all $t\geq \tau$. In particular, $(e^{tB})_{t\geq 0}$ individually eventually dominates $(e^{tA})_{t\geq 0}$.
\end{theorem}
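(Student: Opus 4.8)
The plan is to follow the template of the proof of Theorem~\ref{thm:domination-resolvents-individual}, replacing the resolvent-convergence input (Lemma~\ref{lem:domination-convergence}) by a semigroup analogue built from the smoothing hypothesis (a) together with the semigroup law. First I would normalise $\spb(B) = 0$: replacing $A,B$ by $A - \spb(B), B - \spb(B)$ multiplies both $e^{tB}\modulus{f}$ and $\modulus{e^{tA}f}$ by the same positive scalar $e^{-\spb(B)t}$, so---since $\succeq$ is invariant under scaling by a positive number---the claimed inequalities hold for the rescaled semigroups if and only if they hold for the original ones, and hypotheses (a)--(c) pass verbatim to the rescaled semigroups with $\spb$ of the new $B$ equal to $0$. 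After this, (b) reads: $e^{tB} \to P$ strongly as $t \to \infty$, with $Pg \succeq u$ whenever $0 \lneq g \in E$; and (c) reads: $e^{tA} \to 0$ strongly. As a strong operator limit of the semigroup, $P$ is a bounded projection and $e^{sB}P = P$ for all $s \ge 0$.

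The decisive step, and the one I expect to be the main obstacle, is to upgrade these two strong limits from $E$ to the ideal $E_u$---ordinary strong convergence in $E$ does not transfer to $E_u$ by itself. Here I would use (a): the operators $e^{t_1 A}$ and $e^{t_2 B}$ map $E$ into $E_u$, so by the closed graph theorem (recall $E_u \hookrightarrow E$ continuously) they belong to $\calL(E, E_u)$. Then, for $t \ge t_2$ and $f \in E$, I would exploit the factorisation together with $P = e^{t_2 B}P$:
\[
	e^{tB}f - Pf = e^{t_2 B}\bigl(e^{(t - t_2)B}f - Pf\bigr);
\]
since $e^{(t - t_2)B}f \to Pf$ in $E$ and $e^{t_2 B}\colon E \to E_u$ is continuous, the right-hand side tends to $0$ in $E_u$. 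The same argument applied to $e^{tA}f = e^{t_1 A}\bigl(e^{(t - t_1)A}f\bigr)$ together with (c) gives $e^{tA}f \to 0$ in $E_u$ for $t \ge t_1$. Thus $(e^{tB})_{t \ge t_2}$ converges strongly to $P$ in $\calL(E, E_u)$ and $(e^{tA})_{t \ge t_1}$ converges strongly to $0$ in $\calL(E, E_u)$.

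With this in hand the conclusion is immediate, exactly as in the resolvent case. Fix $0 \ne f \in E$; then $0 \lneq \modulus{f}$, so by (b) there is $c > 0$ with $P\modulus{f} \ge cu$. Since $(e^{tB})_{t \ge t_2}$ is a net of real operators with range in $E_u$ converging strongly to $P$ in $\calL(E,E_u)$, Lemma~\ref{lem:convergence-strong-positivity} yields $\epsilon \in (0, c)$ and $t_0 \ge t_2$ with $e^{tB}\modulus{f} \ge \epsilon u$ for all $t \ge t_0$. On the other hand $e^{tA}f \to 0$ in $E_u$ forces $\norm{\modulus{e^{tA}f}}_u = \norm{e^{tA}f}_u \to 0$, hence $\modulus{e^{tA}f} \le \tfrac{\epsilon}{2}u$ for all $t$ beyond some $t_0' \ge t_1$. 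For $\tau := \max\{t_0, t_0'\}$ and every $t \ge \tau$ this gives
\[
	e^{tB}\modulus{f} - \modulus{e^{tA}f} \ge \epsilon u - \tfrac{\epsilon}{2}u = \tfrac{\epsilon}{2}u \succeq u,
\]
and in particular $\modulus{e^{tA}f} \le e^{tB}\modulus{f}$, i.e.\ $(e^{tB})_{t \ge 0}$ individually eventually dominates $(e^{tA})_{t \ge 0}$. I do not foresee any difficulty beyond the $E_u$-upgrade: everything downstream is a mechanical application of Lemma~\ref{lem:convergence-strong-positivity} and the continuity of the lattice operations, precisely as in Proposition~\ref{prop:domination-resolvents-individual} and Theorem~\ref{thm:domination-resolvents-individual}.
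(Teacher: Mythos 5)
Your proof is correct and follows essentially the same route as the paper's: the paper also rescales so that $\spb(B)=0$, uses the closed graph theorem and the factorisations $e^{tB}=e^{t_2B}e^{(t-t_2)B}$ and $e^{(t+t_1)A}=e^{t_1A}e^{tA}$ to upgrade the strong limits to $\calL(E,E_u)$, invokes Lemma~\ref{lem:convergence-strong-positivity} for the lower bound on $e^{tB}\modulus{f}$, and finishes with the same $\epsilon/2$ estimate. The only cosmetic difference is that the paper factors the final step through Proposition~\ref{prop:domination-semigroups-individual}, whereas you carry it out directly.
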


The assumption~(b) in the previous theorem plays an important role in the theory of individual eventual positivity. Let $(e^{tB})_{t\geq 0}$ be a real $C_0$-semigroup on a complex Banach lattice $E$ and let $0 \le u\in E$ be a quasi-interior point. If for each $0\lneq f\in E$, there exists a time $t_0\geq 0$ such that $e^{tB}f\succeq u$ for all $t \ge t_0$, then the semigroup $(e^{tB})_{t\geq 0}$ is said to be \emph{individually eventually strongly positive} (with respect to $u$). The theory of eventually positive semigroups was developed in several recent articles. In particular, it was shown in \cite[Theorem~5.1]{DanersGlueck2017} that if the spectral bound $\spb(B)$ is a spectral value of $B$ and a pole of the resolvent $\Res(\argument,B)$, then individual eventual strong positivity of $(e^{tB})_{t\geq 0}$ with respect to $u$ implies that the spectral projection $P$ associated with $\spb(B)$ satisfies $Pf\succeq u$ for all $0\lneq f\in E$. 

We prove the following result before proving Theorem~\ref{thm:domination-semigroups-individual}:

\begin{proposition}
	\label{prop:domination-semigroups-individual}
	Let $(e^{tA})_{t\geq 0}$ and $(e^{tB})_{t\geq 0}$ be real $C_0$-semigroups on a complex Banach lattice $E$ and let $u \ge 0$ be a quasi-interior point of $E$. In addition, assume the following: 
	\begin{enumerate}[\upshape (a)]
		\item There exists  $t_1 \geq 0$ such that $e^{t_1 A}E\subseteq E_u$.
		
		\item For each $0\lneq f\in E$, there exists a time $t_0\geq 0$ and a constant $c>0$ such that $e^{tB}f \geq cu$ for all $t\geq t_0$.
		
		\item The semigroup $(e^{tA})_{t\geq 0}$ converges to $0$ strongly as $t\to\infty$.
	\end{enumerate}
	Then for each non-zero $f\in E$, there exists a time $\tau\geq 0$ such that $e^{tB}\modulus{f}-\modulus{e^{tA}f}\succeq u$ for all $t\geq \tau$. In particular, $(e^{tB})_{t\geq 0}$ individually eventually dominates $(e^{tA})_{t\geq 0}$.
\end{proposition}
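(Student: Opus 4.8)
The plan is to mirror the argument used for Proposition~\ref{prop:domination-resolvents-individual}. Fix a non-zero $f\in E$; I would show that there exist a constant $c>0$ and a time $\tau\ge 0$ such that, for all $t\ge\tau$, one has simultaneously $e^{tB}\modulus{f}\ge cu$ and $\modulus{e^{tA}f}\le\tfrac{c}{2}u$. Subtracting the second inequality from the first then yields $e^{tB}\modulus{f}-\modulus{e^{tA}f}\ge\tfrac{c}{2}u$, which is precisely the asserted relation $\succeq u$. (Note that $e^{tB}$ is real, so $e^{tB}\modulus{f}\in E_{\bbR}$ and $\modulus{e^{tA}f}\in E_{\bbR}$, hence these lattice inequalities are meaningful.)

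The lower estimate is immediate: since $f\neq 0$ we have $0\lneq\modulus{f}\in E$, so assumption~(b) furnishes $t_0\ge 0$ and $c>0$ with $e^{tB}\modulus{f}\ge cu$ for all $t\ge t_0$. The crux is the upper estimate, which amounts to upgrading the strong convergence $e^{tA}\to 0$ of assumption~(c) from the topology of $\calL(E)$ to that of $\calL(E,E_u)$ along the orbit of $f$. As in the proof of Lemma~\ref{lem:domination-convergence}, assumption~(a) together with the closed graph theorem gives $e^{t_1 A}\in\calL(E,E_u)$: the inclusion $E_u\hookrightarrow E$ is continuous and $e^{t_1 A}\in\calL(E)$, so the graph of $e^{t_1 A}$ viewed as a map $E\to E_u$ is closed. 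Writing $e^{tA}f=e^{t_1 A}\bigl(e^{(t-t_1)A}f\bigr)$ for $t\ge t_1$ and using that $e^{(t-t_1)A}f\to 0$ in $E$ as $t\to\infty$ by assumption~(c), one concludes that $e^{tA}f\to 0$ in $E_u$ as $t\to\infty$. Hence there is $t_3\ge t_1$ with $\norm{e^{tA}f}_u\le c/2$, i.e.\ $\modulus{e^{tA}f}\le\tfrac{c}{2}u$, for all $t\ge t_3$.

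Setting $\tau:=\max\{t_0,t_3\}$ and combining the two estimates establishes the main claim. For the ``in particular'' statement, the inequality $e^{tB}\modulus{f}-\modulus{e^{tA}f}\ge\tfrac{c}{2}u\ge 0$ for $t\ge\tau$ gives $\modulus{e^{tA}f}\le e^{tB}\modulus{f}$ for all $t\ge\tau$; together with the trivial case $f=0$, this means that $(e^{tB})_{t\ge 0}$ individually eventually dominates $(e^{tA})_{t\ge 0}$. The only step that goes beyond routine bookkeeping is the promotion of the convergence $e^{tA}f\to 0$ to the topology of the principal ideal $E_u$; this is exactly where the smoothing hypothesis~(a) is used, and it plays the role that Lemma~\ref{lem:domination-convergence}(i) played in the resolvent setting.
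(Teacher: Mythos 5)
Your proposal is correct and follows essentially the same route as the paper's proof: both use the closed graph theorem to obtain $e^{t_1 A}\in\calL(E,E_u)$, factor $e^{tA}f$ through $e^{t_1A}$ to upgrade the strong convergence to convergence in $E_u$, apply assumption~(b) to $\modulus{f}$, and combine the two estimates with $\tau$ the maximum of the relevant times.
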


\begin{proof}
	First of all, note that by the closed graph theorem we have $e^{t_1 A}\in \calL(E, E_u)$. Therefore $e^{(t+t_1)A} = e^{t_1 A} e^{tA} \to 0$ 
	 in $\calL(E, E_u)$ as $t\to \infty$.
	Fix a non-zero vector $f\in E$. Then there exists $t_2\geq 0$ such that
	\[
		\norm{e^{(t+t_1)A}f}_u \leq \frac{c}{2}
	\]
	for all $t\geq t_2$, where $c$ is the constant from assumption (b) for the vector $\modulus{f}$. 
	Hence $\modulus{e^{tA}f}\leq \frac{c}{2}u$ for all $t\geq t_1+t_2$. Let $\tau=\max\{t_0,t_1+t_2\}$. Then
	\[
		e^{tB}\modulus{f}-\modulus{e^{tA}f}\geq cu-\frac{c}{2}u=\frac{c}{2}u
	\]
	for all $t\geq \tau$, as required.
\end{proof}

To prove Theorem~\ref{thm:domination-semigroups-individual}, we need to know that the strong convergence $e^{t(B-\spb(B))}\to P$ as $t\to \infty$ implies the assumption (b) in Proposition~\ref{prop:domination-semigroups-individual} for the semigroup generated by $B-\spb(B)$. This was essentially shown in \cite[Theorem~5.2]{DanersGlueckKennedy2016b} but the independence of the constant $c$ of time $t$ was not mentioned explicitly there; it does, however, follow from Lemma~\ref{lem:convergence-strong-positivity}.
The details are as follows.

\begin{proof}[Proof of Theorem~\ref{thm:domination-semigroups-individual}]
	Replacing $B$ by $B-\spb(B)$ and $A$ by $A-\spb(B)$, we may assume that $\spb(B)=0$.
	We only need to prove that assumption (b) in Proposition~\ref{prop:domination-semigroups-individual} is satisfied. To this end, fix $0\lneq f\in E$ and let $c>0$ be such that $Pf\geq cu$. 
	Since $e^{t_2 B}E\subseteq E_u$, the operator $e^{t_2 B}$ is bounded from $E$ to $E_u$ by the closed graph theorem. We therefore obtain that
	\[
		e^{tB}= e^{t_2 B} e^{(t-t_2)B} \to P \text{ in } \calL(E,E_u)
	\]
	as $t\to\infty$. 
	Combining this with assumption~(b) and Lemma~\ref{lem:convergence-strong-positivity}, we obtain that for each $0\lneq f\in E$, there exist a constant $c>0$ and time $t_0\geq 0$ such that $e^{tB}f\geq cu$ for all $t\geq t_0$.
Hence, all assumptions of Proposition~\ref{prop:domination-semigroups-individual} are satisfied, so the result follows.
\end{proof}

After obtaining sufficient conditions for individual domination of semigroups, we now turn to the uniform case. 
For $C_0$-semigroups $(e^{tA})_{t\geq 0}$ and $(e^{tB})_{t\geq 0}$ on a complex Banach lattice $E$ we say that $(e^{tA})_{t\geq 0}$ \emph{uniformly eventually dominates} $(e^{tB})_{t\geq 0}$ if there exists a time $t_0 \ge 0$ such that
\[
	\modulus{e^{tA}f} \le e^{tB} \modulus{f}
\]
for all $t \ge t_0$ and all $f \in E$.
Before giving a sufficient condition for uniform eventual domination, we show by means of an example that the notions of individual and uniform eventual domination are not equivalent.

\begin{example}
	Consider the Banach lattice $E=C[0,1]$ and the strictly positive functionals $\phi_A,\phi_B: E\to \mathbb C$ given by
	\[
		\duality{\phi_A}{f} = \int_0^1 f(x)\dx x\quad \text{ and }\quad \duality{\phi_B}{f} = 2\int_0^1 xf(x)\dx x
	\]
	for all $f\in E$. Let $P_A$ and $P_B$ be the rank-one projections on $E$ given by $P_A=\one\otimes \phi_A$ and $P_B=\one\otimes \phi_B$; where $\one\in E$ denotes the constant one function.
	
	Now consider the bounded linear operators
	\[
		A=P_A-\frac32 \id\quad \text{ and }\quad B=P_B-\id
	\]
	on $E$. Then $\spb(B)=0$ and $e^{tB}=P_B+e^{-t}(\id-P_B) \to P_B$ in operator norm as $t\to \infty$. 
	Moreover, $\spb(A) = -\frac12$, which implies -- since $A$ is a bounded operator -- that $(e^{tA})_{t\geq 0}$ converges in operator norm to $0$ as $t\to\infty$. Furthermore, because $E_{\one}=E$ and $P_Bf\succeq \one$ whenever $0\lneq f\in E$, all the assumptions of Theorem~\ref{thm:domination-semigroups-individual} are satisfied. 
	Hence, $(e^{tB})_{t\geq 0}$ individually eventually dominates $(e^{tA})_{t\geq 0}$. 
	
	Next, note that $P_B$ is the spectral projection of $B$ associated with $\spb(B)=0$. As $\spb(A) < 0$, Theorem~\ref{thm:domination-resolvents-individual} implies that for each $f\geq 0$, we have $\Res(\lambda, B)f- \Res(\lambda, A)f\succeq \one$ for all $\lambda$ in an $f$-dependent right neighbourhood of $0$.
	
	In order to see that the eventual domination is not uniform, we define, for each $n\in \bbN$, a function $0 \le f_n \in E$ in the following way:~let $f_n(x)= 1-nx$ for $x \in [0,1/n)$ and $f_n(x) = 0$ for $x \in [1/n,1]$. 
	Then
	\[
		P_Af_n = \frac{1}{2n}\one \qquad \text{and} \qquad P_Bf_n=\frac{1}{3n^2}\one.
	\]
	Using $f_n(1)=0$, we obtain
	\begin{align*}
		(e^{tB} - e^{tA})f_n(1) &= \frac{1}{3n^2}(1-e^{-t}) -\frac{e^{-t/2}}{2n}(1-e^{-t})\\
						&= \left(\frac{1}{3n^2} -\frac{e^{-t/2}}{2n}\right)(1-e^{-t})
	\end{align*}
	for all $t\geq 0$. 
	Thus, for each fixed $t> 0$, we conclude for all sufficiently large $n$ (namely, for $n>2/3 e^{t/2}$) that $(e^{tB} - e^{tA})f_n \not\geq 0$. 
	Therefore, $(e^{tB})_{t\geq 0}$ does not uniformly eventually dominate $(e^{tA})_{t\geq 0}$.
	
	Similarly, using the simple fact that the resolvent of every projection $P$ is given by
	\begin{align*}
		\Res(\lambda,P) = \frac{P + (\lambda - 1)\id}{\lambda(\lambda-1)}
	\end{align*}
	for all $\lambda \in \bbC \setminus \{0,1\}$, we can compute that 
	\[
		(\Res(\lambda,B)-\Res(\lambda,A))f_n(1) 
		= 
		\frac{1}{3n^2}\left( \frac{1}{\lambda(\lambda+1)}\right) - \frac2n\left(\frac{1}{(2\lambda+3)(2\lambda+1)}\right)
	\]
	for each $\lambda>0$. Hence, for each fixed $\lambda > 0$ we have
	\[
		\Res(\lambda,B)f_n(1) < \Res(\lambda,A)f_n(1)
	\]
	for all sufficiently large $n$.
\end{example}

Conditions for uniform eventual domination between semigroups were given in \cite[Theorem~3.7]{GlueckMugnolo2021}. 
However, only the case of self-adjoint semigroups on $L^2$-spaces was considered there.
By adapting techniques from \cite{DanersGlueck2018b} we now give a sufficient condition for uniform eventual domination which also works for semigroups on more general spaces (and thus, in particular, without any self-adjointness assumption).
As in Theorem~\ref{thm:domination-semigroups-individual}, we do not need the dominated
 semigroup to be (eventually) positive, and by including a modulus on both sides of the eventual domination estimate, we are able to consider non-positive initial values as well.

\begin{theorem}
	\label{thm:domination-semigroups-uniform}
	Let $(e^{tA})_{t\geq 0}$ and $(e^{tB})_{t\geq 0}$ be real $C_0$-semigroups on a complex Banach lattice $E$. Let $u \ge 0$ be a quasi-interior point of $E$ and $\phi\in E'$ be a strictly positive functional. In addition, assume the following: 
	\begin{enumerate}[\upshape (a)]
		\item There exist times $t_1, t_2 \geq 0$ such that $e^{t_1 A}E\subseteq E_u$ and $e^{t_2 A'}E'\subseteq (E')_{\varphi}$.
		
		\item There exists a time $t_3\geq 0$ and a constant $c>0$ such that $e^{tB} \geq  c(u\otimes \varphi)$ for all $t\geq t_3$.
		
		\item The semigroup $(e^{tA})_{t\geq 0}$ converges to $0$ in operator norm as $t\to\infty$.
	\end{enumerate}
	Then there exists a time $\tau\geq 0$ such that $e^{tB}\modulus{f}- \modulus{e^{tA}f} \succeq (u \otimes \varphi) \modulus{f}$ for all $t\geq \tau$ and all $f\in E$. In particular, $(e^{tB})_{t\geq 0}$ uniformly eventually dominates $(e^{tA})_{t\geq 0}$.
\end{theorem}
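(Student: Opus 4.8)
The plan is to mimic the structure of the individual case (Proposition~\ref{prop:domination-semigroups-individual} and Theorem~\ref{thm:domination-semigroups-individual}), but now tracking uniformity in $f$ by working with the rank-one operator $u \otimes \varphi$ in place of pointwise lower bounds. First I would reduce, as usual, to $\spb(B) = 0$ by replacing $B$ with $B - \spb(B)$ and $A$ with $A - \spb(B)$; this does not affect (a) and rescales $c$ in (b), while (c) is exactly the statement that the (unrescaled) semigroup $e^{tA} \to 0$. The key structural observation is that assumption (a), via the closed graph theorem, gives $e^{t_1 A} \in \calL(E, E_u)$ and $e^{t_2 A'} \in \calL(E', (E')_\varphi)$; combined with (c) this yields $e^{(t + t_1)A} = e^{t_1 A} e^{tA} \to 0$ \emph{in operator norm} in $\calL(E, E_u)$, and likewise $e^{(t+t_2)A'} \to 0$ in $\calL(E', (E')_\varphi)$. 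The norm convergence (not just strong) is what (c) buys us over assumption (c) of Theorem~\ref{thm:domination-semigroups-individual}, and it is exactly what is needed for a uniform-in-$f$ estimate.

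Next I would estimate $\modulus{e^{tA}f}$ uniformly. The point is that for $g \in E$ and $\psi \in E'$ one has the elementary bound $\modulus{e^{tA} g} \le \norm{e^{tA}}_{\calL(E,E_u)} \, \norm{g} \, u$ once $\Ima e^{tA} \subseteq E_u$; and dually, pairing against a positive functional, $\duality{\varphi}{\modulus{e^{tA}g}}$ can be controlled. The clean way to get the factor $\modulus{f}$ on the right-hand side of the desired inequality is to write, for large $t$, $\modulus{e^{tA}f} \le \varepsilon \, (u \otimes \varphi)\modulus{f}$ for a suitable small $\varepsilon > 0$: indeed $(u \otimes \varphi)\modulus{f} = \duality{\varphi}{\modulus{f}} \, u$, and since $\varphi$ is strictly positive this scalar is positive for $f \neq 0$; one needs a two-sided comparison $\duality{\varphi}{\modulus{f}} \asymp \norm{f}$ which fails in general, so instead I would route the argument through the dual semigroup. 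Concretely, factor $e^{tA}$ through a rank-one-type estimate: for $s, t \ge 0$ large, $|e^{(s+t)A}f| = |e^{sA} e^{tA} f|$, and estimate $e^{tA}f$ in $E$ by its $E_u$-norm, then use that $e^{sA}$ restricted appropriately is small; the adjoint condition $e^{t_2 A'}E' \subseteq (E')_\varphi$ is used to show that $\norm{e^{tA}g}$ itself is dominated by $\duality{\varphi}{\modulus{g}}$ up to the operator norm of $e^{t_2 A'}$ in $\calL(E', (E')_\varphi)$ — this is the standard duality trick from \cite{DanersGlueck2018b}: $\norm{h} = \sup_{\norm{\psi} \le 1} |\duality{\psi}{h}|$ and every such $\psi = e^{t_2 A'}\psi_0$ lands in $(E')_\varphi$, hence $|\duality{\psi}{h}| \le \norm{\psi_0}\,\norm{e^{t_2 A'}}_{\calL(E',(E')_\varphi)} \duality{\varphi}{\modulus{h}}$.

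Then I would handle the lower bound on $e^{tB}$: assumption (b) directly gives $e^{tB}\modulus{f} \ge c\,(u\otimes\varphi)\modulus{f}$ for all $t \ge t_3$ and all $f$, with $c$ uniform. Combining, for $t$ large enough that the $A$-term is $\le \tfrac{c}{2}(u\otimes\varphi)\modulus{f}$ (uniformly in $f$, by the norm estimates above) and $t \ge t_3$, we get $e^{tB}\modulus{f} - \modulus{e^{tA}f} \ge \tfrac{c}{2}(u\otimes\varphi)\modulus{f}$, which is the claim; the ``in particular'' statement follows since $(u\otimes\varphi)\modulus{f} \ge 0$. The main obstacle I anticipate is precisely the uniform domination of $\modulus{e^{tA}f}$ by a multiple of $(u\otimes\varphi)\modulus{f}$: one must carefully split the time into three pieces to simultaneously exploit $e^{t_1 A}E \subseteq E_u$ (to land in $E_u$), $e^{t_2 A'}E' \subseteq (E')_\varphi$ (to convert the $E$-norm into a pairing against $\varphi$), and operator-norm decay of $e^{tA}$ (to make the resulting constant small), and to check that all the closed-graph-theorem boundedness constants are finite and combine correctly. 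Everything else is bookkeeping.
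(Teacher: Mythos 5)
Your plan is correct and follows essentially the same route as the paper: factor $e^{(t+t_1+t_2)A}=e^{t_1A}e^{tA}e^{t_2A}$, use the two smoothing hypotheses to bound the outer factors as operators $E\to E_u$ and $E^\varphi\to E$ (the paper cites \cite[Proposition~2.5]{AroraGlueck2021b} for the duality step you spell out by hand), let the operator-norm decay of the middle factor make $\norm{e^{tA}}_{E^\varphi\to E_u}\le c/2$, and combine with assumption~(b). The only superfluous element is your initial rescaling to $\spb(B)=0$, which is not needed since the hypotheses are already stated without reference to $\spb(B)$.
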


We make a few comments on the assumptions of the above theorem before proving the result.

\begin{remarks}
	\label{remark:domination-semigroups-uniform}
	(a) 
	\emph{Smoothing assumptions}. Assumption (a) in Theorem~\ref{thm:domination-semigroups-uniform} can be seen as an abstract 
	version of an ultracontracitivity property. The idea to use this assumption to obtain uniform eventual domination is directly borrowed from \cite[Theorem~3.1]{DanersGlueck2018b}, where similar smoothing assumptions were imposed to obtain uniform eventual positivity of semigroups.
	
	(b) 
	\emph{Eventual positivity assumption}. Assumption (b) in above is a uniform eventual strong positivity assumption and is in line with the individual eventual strong positivity assumption in Proposition~\ref{prop:domination-semigroups-individual} on the dominating semigroup. In fact, our assumption here is exactly the conclusion of \cite[Theorem~3.1]{DanersGlueck2018b} (in the case where $\spb(B)=0$), and is therefore satisfied whenever the hypothesis of \cite[Theorem~3.1]{DanersGlueck2018b} is. In particular, assumption (b) can be replaced by the following two conditions:
	\begin{enumerate}[\upshape (i)]
		\item There exist times $t_3, t_4\geq 0$ such that $e^{t_3 B}E\subseteq E_u$ and $e^{t_4 B'}E'\subseteq (E')_{\varphi}$.
	
		\item The spectral bound $\spb(B)$ is a dominant spectral value of $B$ and the corresponding eigenspace is generated by a vector $v\succeq u $. Moreover, the dual eigenspace $\ker(\spb(B)-B')$ contains a functional $\psi\succeq \varphi$.
	\end{enumerate}
	In this framework, it is worth mentioning an inaccuracy in the statement of the conclusion of \cite[Theorem~3.1]{DanersGlueck2018b}: the estimate $e^{tA} \ge \epsilon (u \otimes \varphi)$ for all $t \ge t_0$ in the theorem is clearly incorrect in cases where the semigroup converges to $0$ as $t \to \infty$. 
	This is due to the lack of a scaling factor: as is obvious from the proof, the correct conclusion there is $e^{t(A-\spb(A))} \ge \epsilon (u \otimes \varphi)$ for all $t \ge t_0$.
	
	(c) 
	\emph{Convergence in the operator norm}. 
	In order to obtain individual eventual domination in Proposition~\ref{prop:domination-semigroups-individual}, we assumed there that the dominated semigroup converges strongly to $0$ as $t\to\infty$. Combining this with the assumption $e^{t_1A}E\subseteq E_u$ for some $t_1\geq 0$, we were able show that $e^{tA}f\preceq u$ for sufficiently large $t$, which was an important ingredient for the proof of individual eventual domination. 
	In the same vein, in order to obtain uniform eventual domination, we require the dominated semigroup to converge to $0$ in the operator norm, and we will combine this with the smoothing assumption (a) in Theorem~\ref{thm:domination-semigroups-uniform} to obtain a similar estimate.
	
	We note that operator norm convergence of $C_0$-semigroups has been investigated in detail in \cite{Thieme1998}.
	
	(d)
	By a simple rescaling argument we can clearly replace the estimate in assumption~(b) by the estimate $e^{t(B-s(B))} \geq  c(u\otimes \varphi)$; in this case we also have to replace assumption~(c) by the assumption that $(e^{t(A - s(B))})_{t\geq 0}$ converges to $0$ in operator norm.
\end{remarks}

In the following proof not only do we need the principal ideal $E_u$, but also the Banach lattice $E^\varphi$ which is, for a strictly positive functional $\varphi \in E'$, defined to be the completion of $E$ with respect to the norm $\duality{\varphi}{\modulus{\argument}}$. This space plays an essential role in \cite{DanersGlueck2018b} and \cite{AroraGlueck2021b}; we also refer to these references for more details about $E^\varphi$.

\begin{proof}[Proof of Theorem~\ref{thm:domination-semigroups-uniform}]
	We proceed as in the proof of \cite[Theorem~3.1]{DanersGlueck2018b}. 
	For each $t \ge 0$ we deduce from assumption~(a) together with \cite[Proposition~2.5]{AroraGlueck2021b} that
	\[
		e^{(t+t_1+t_2)A} = e^{t_1 A} e^{tA} e^{t_2 A}
	\]
	extends to a bounded linear operator from $E^\varphi$ to $E_u$ for each time $t\geq 0$. Furthermore,
	\[
		\norm{e^{(t+t_1+t_2)A}}_{E^\varphi \to E_u} \leq \norm{e^{t_1A}}_{E\to E_u} \norm{e^{tA}}_{E\to E} \norm{e^{t_2 A}}_{E^\varphi \to E},
	\]
	and the latter converges to $0$ as $t\to \infty$. 
	This gives the existence of $t_0\geq 0$ such that $\norm{e^{(t+t_1+t_2)A}}_{E^\varphi \to E_u} \leq c/2$ for all $t\geq t_0$; here the constant $c>0$ is the same one as in assumption~(b).
	
	Fix $f\in E$. Then the preceding inequality yields
	\[
		\norm{e^{tA}f}_u \leq \frac{c}{2} \norm{f}_{E^\varphi} = \frac{c}{2} \duality{\varphi}{\modulus{f}} 
	\]
	for all $t\geq t_0+t_1+t_2$. 
	We thus obtain $\modulus{e^{tA}f} \leq \frac{c}{2} \duality{\varphi}{\modulus{f}} u= \frac{c}{2} (u\otimes \varphi) \modulus{f}$ for all $t\geq t_0+t_1+t_2$. Let $\tau :=\max\{t_0+t_1+t_2, t_3\}$. The eventual positivity assumption~(b) on $(e^{tB})_{t\geq 0}$ along with the above inequality gives
	\[
		e^{tB}\modulus{f}-\modulus{e^{tA}f}\geq c (u\otimes \varphi)\modulus{f} - \frac{c}{2} (u\otimes \varphi)\modulus{f}=\frac{c}{2} (u\otimes \varphi)\modulus{f}
	\]
	for all $t\geq \tau$. 
	This proves the assertion.
\end{proof}

\section{Applications}
	\label{applications}

In this section, we demonstrate how our abstract results can be applied to a variety of differential operators. Several applications of eventual domination of semigroups were already given in \cite[Section~4]{GlueckMugnolo2021}, but now we have more freedom as our dominated semigroup is not required to be eventually positive. Moreover, the theory in Section~\ref{domination-resolvents} enables us to prove eventual domination of the resolvents as well.
Our intention is mainly to illustrate that eventual domination of semigroups and resolvents can happen in various situations, so we try to avoid technical difficulties by keeping the differential operators in our examples rather simple.

\subsection{The Laplace operator with anti-symmetric boundary conditions}

Let us consider the realisation of the Laplace operator on $L^2(-1,1)$ with anti-symmetric boundary conditions given by
\begin{align*}
	\dom{\Delta^{AS}} &= \{f \in H^2(-1,1): f(-1) = -f(1), \; f'(-1) = -f'(1)\}, \\ 
	\Delta^{AS} f     &= f''.
\end{align*}

This is a self-adjoint operator on $L^2(-1,1)$ and $-\Delta^{AS}$ is associated with the sesqui-linear form $a$ given by $a(u,v) = \int_{-1}^1 u \overline{v} \dx x$ with form domain
\begin{align*}
	\dom{a} = \{f \in H^1(-1,1): \; f(-1) = -f(1)\}.
\end{align*}
Since $H^2(-1,1)$ embeds compactly into $L^2(-1,1)$, it follows that $\Delta^{AS}$ has compact resolvent and hence all spectral values of $\Delta^{AS}$ are eigenvalues.
Moreover, a straightforward computation shows that the eigenvalues of $\Delta^{AS}$ are precisely the numbers
\begin{align*}
	-\left(k+\frac{1}{2}\right)^2 \pi^2 \quad \text{for integers } k \ge 0,
\end{align*}
each of them with multiplicity $2$; 
the eigenspace of the leading eigenvalue $\spb(\Delta^{AS}) = -\pi^2/4$ is spanned by the functions $u_1, u_2$ given by
\begin{align*}
	u_1(x) = \cos\left(\frac{\pi}{2}x\right) 
	\qquad \text{and} \qquad 
	u_2(x) = \sin\left(\frac{\pi}{2}x\right)
\end{align*}
for $x \in (-1,1)$.
Since $\Delta^{AS}$ is self-adjoint, the spectral projection $P$ associated with $\spb(\Delta^{AS})$ is the orthogonal projection onto the eigenspace, given by
\begin{align*}
	P = c \big( u_1 \otimes u_1 + u_2\otimes u_2 \big)
\end{align*}
for an appropriate normalisation constant $c > 0$.
In particular, the operator $P$ is not positive since, for instance, $\left(P \one_{(0,1)}\right)(x)$ is strictly negative for $x$ close to $-1$.

Due to the non-positivity of the projection $P$, the semigroup $(e^{t \Delta^{AS}})_{t \ge 0}$ is not positive and, more generally, not even eventually positive \cite[Theorem~8.3]{DanersGlueck2018b} (in fact, this reference shows that the semigroup does not even have the weaker property of being \emph{asymptotically positive}).

Now, as a second operator, we consider the Neumann Laplacian $\Delta^{N}$ on $L^2(-1,1)$. 
It generates a positive semigroup, has spectral bound $0$, and the corresponding eigenspace is spanned by the constant function $\one$.
We have the following comparison result.

\begin{theorem}
	There exists a time $\tau \ge 0$ such that the estimate
	\begin{align*}
		e^{t\Delta^N} \modulus{f} - \modulus{e^{t\Delta^{AS}}f}\succeq (\one \otimes \one) \modulus{f}
	\end{align*}
	holds for all $t \ge \tau$ and all $f \in L^2(-1,1)$.
	On the other hand, the semigroup $(e^{t\Delta^N})_{t \ge 0}$ does not dominate $(e^{t\Delta^{AS}})_{t \ge 0}$ for all times, i.e., there exists a time $t > 0$ and a function $f \in L^2(-1,1)$ such that
	\begin{align*}
		e^{t\Delta^N} \modulus{f} \not\geq \modulus{e^{t\Delta^{AS}}f}.
	\end{align*}
\end{theorem}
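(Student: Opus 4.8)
The positive part of the statement is a direct application of Theorem~\ref{thm:domination-semigroups-uniform} with $B = \Delta^N$, $A = \Delta^{AS}$, $u = \one$, and $\varphi = \one$ (identifying $L^2(-1,1)$ with its dual). First I would check assumption~(a): since both operators are self-adjoint with compact resolvent and are associated with $H^1$-type forms, the analytic semigroups $(e^{t\Delta^N})_{t\ge0}$ and $(e^{t\Delta^{AS}})_{t\ge0}$ map $L^2(-1,1)$ into $H^2(-1,1)$ for every $t>0$, hence into $C[-1,1] \subseteq L^\infty(-1,1) = L^2(-1,1)_{\one}$; the same Sobolev smoothing applied to the (self-)adjoint operators gives $e^{t\Delta^{AS}}(L^2)' \subseteq (L^2)_{\one}$, so assumption~(a) holds with, say, $t_1=t_2=1$. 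Assumption~(c): $\spb(\Delta^{AS}) = -\pi^2/4 < 0$ and $\Delta^{AS}$ generates an analytic semigroup, so $(e^{t\Delta^{AS}})_{t\ge0}$ decays to $0$ exponentially in operator norm; assumption~(c) holds. Assumption~(b): $\spb(\Delta^N)=0$ is a dominant spectral value with one-dimensional eigenspace spanned by $\one \succeq \one$, and the dual eigenspace also contains $\one \succeq \one$; moreover the Neumann semigroup smooths into $C[-1,1]$ as above, so the sufficient conditions in Remark~\ref{remark:domination-semigroups-uniform}(b)(i)--(ii) are met, giving $e^{t\Delta^N} \ge c(\one\otimes\one)$ for large $t$ (after noting $\spb(\Delta^N)=0$, so no rescaling is needed). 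Theorem~\ref{thm:domination-semigroups-uniform} then yields the asserted estimate with $(\one\otimes\one)\modulus{f}$ on the right-hand side for all $t \ge \tau$ and all $f$.

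For the second (negative) assertion, the idea is to exploit the fact that $\Delta^{AS}$ has eigenvalue $-\pi^2/4$ with \emph{two}-dimensional eigenspace while $\Delta^N$'s leading eigenvalue $0$ sits strictly to the right. I would pick the eigenfunction $u_2(x)=\sin(\tfrac\pi2 x)$ of $\Delta^{AS}$, which is odd and changes sign. Then $e^{t\Delta^{AS}} u_2 = e^{-t\pi^2/4} u_2$, so $\modulus{e^{t\Delta^{AS}} u_2} = e^{-t\pi^2/4}\modulus{u_2}$ does not decay relative to the eigenvalue $-\pi^2/4$. One option is to take $f = u_2$ directly; another, cleaner for getting a sign violation at a \emph{finite} time rather than asymptotically, is to take $f$ to be a small bump supported near $x=1$ (analogous to the $f_n$ in the preceding Example): for such $f$, $\modulus{f}$ has very small mass, so $e^{t\Delta^N}\modulus{f} = \duality{\one}{\modulus{f}}\one + (\text{exponentially small})$ is uniformly small pointwise, whereas $e^{t\Delta^{AS}} f$ retains a comparatively large negative lobe near $x=-1$ coming from the component of $f$ along $u_1$ (recall $(P\one_{(0,1)})(x)<0$ for $x$ near $-1$, and the same computation shows the leading-order term of $e^{t\Delta^{AS}}f$ near $x=-1$ is negative and of order $\duality{u_1}{f}$, which dominates the $L^2$-mass $\duality{\one}{\modulus{f}}$ of a suitably chosen bump).

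I expect the main obstacle to be the second assertion, specifically making the finite-time sign violation rigorous: one must control $e^{t\Delta^N}\modulus{f}$ from above pointwise (easy, by the maximum principle or the spectral expansion, since it converges uniformly to $\duality{\one}{\modulus{f}}\one$) and $e^{t\Delta^{AS}}f$ from below near $x=-1$ (this requires either the explicit eigenfunction expansion of $e^{t\Delta^{AS}}$, which is available since all eigenfunctions are explicit trigonometric functions, or a scaling argument as in the Example). A convenient route is to fix $t>0$ and let $f = f_n$ be the bump of width $1/n$ near $x=1$ as in the Example, compute $(e^{t\Delta^N}\modulus{f_n})(-1) = \duality{\one}{f_n}(1-e^{-t}) + O(e^{-\pi^2 t}) = O(1/n)$ while $(e^{t\Delta^{AS}}f_n)(-1)$, expanded in the eigenbasis, has leading term $c\,e^{-t\pi^2/4}\,u_1(-1)\duality{u_1}{f_n} = c\,e^{-t\pi^2/4}\cos(\tfrac\pi2)\cdot(\cdots)$ — here one must be slightly careful since $u_1(-1)=\cos(-\pi/2)=0$, so the relevant non-vanishing contribution near $x=-1$ actually comes from the $k=1$ eigenfunctions or from $u_2(-1)=\sin(-\pi/2)=-1$; sorting out which eigenfunction gives the dominant negative contribution at $x=-1$ for a right-concentrated bump is the delicate bookkeeping step. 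Once the correct leading term is identified (of order $1/n$ from a single eigenfunction, against the $O(1/n^2)$-type mass of $f_n$ after the $\duality{\one}{f_n} = \tfrac1{2n}$ cancellation — or more simply, choosing $f$ not concentrated but rather equal to a fixed sign-changing function so that the violation is genuinely present), the inequality $e^{t\Delta^N}\modulus{f}\not\ge\modulus{e^{t\Delta^{AS}}f}$ at that point follows by comparing the two orders of magnitude.
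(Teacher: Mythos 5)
Your proof of the eventual domination estimate is correct and is exactly the paper's argument: apply Theorem~\ref{thm:domination-semigroups-uniform} with $u=\varphi=\one$, using self-adjointness and $H^2$-smoothing for assumption~(a), Remark~\ref{remark:domination-semigroups-uniform}(b) for assumption~(b), and $\spb(\Delta^{AS})=-\pi^2/4<0=\spb(\Delta^N)$ plus analyticity for assumption~(c).

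For the negative assertion you take a genuinely different route from the paper, and as written it has a gap. The paper disposes of this part in one line via form methods: domination for \emph{all} times is characterised by \cite[Theorem~2.21]{Ouhabaz2005}, and the form domain $\{f\in H^1(-1,1):f(-1)=-f(1)\}$ is not an ideal (in the sense of \cite[Definition~2.19]{Ouhabaz2005}) of $H^1(-1,1)$, so domination must fail at some time. Your direct spectral computation can in principle be pushed through, but the order-of-magnitude bookkeeping you borrow from the Example in Section~\ref{domination-semigroup} does not transfer. There, the dominating projection was $\one\otimes\phi_B$ with $\phi_B$ of density $2x$ vanishing at the concentration point of $f_n$, which produced the decisive $O(1/n^2)$ versus $O(1/n)$ mismatch. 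Here the Neumann invariant density is constant, so $\duality{\one}{f_n}$ is of order $1/n$ --- the \emph{same} order as $\duality{u_2}{f_n}$ --- and both $(e^{t\Delta^N}\modulus{f_n})(-1)$ and $\modulus{(e^{t\Delta^{AS}}f_n)(-1)}$ scale like $1/n$. Comparing powers of $n$ therefore decides nothing; the comparison reduces to the $t$-dependent constants, i.e.\ to the heat kernels at the pair of points $(-1,1)$, and there the violation does occur but only for \emph{small} $t$: by the method of images the anti-periodic kernel satisfies $p_t^{AS}(-1,1)\sim -(4\pi t)^{-1/2}$ as $t\downarrow 0$, whereas the Neumann kernel $p_t^N(-1,1)$ is exponentially small. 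You would need to make this small-time asymptotic explicit; your proposal instead leaves the "delicate bookkeeping step" unresolved. Your fallback of taking a fixed sign-changing $f$ (e.g.\ $f=u_2$) is also not justified: it is not clear that $e^{t\Delta^N}\modulus{u_2}\ge e^{-t\pi^2/4}\modulus{u_2}$ fails for some $t$, and no argument is offered.
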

\begin{proof}
	The eventual domination claim follows from Theorem~\ref{thm:domination-semigroups-uniform}, applied to the vectors $u = \varphi = \one$. 
	(There are various ways to see that assumption~(b) in the theorem is satisfied for $B = \Delta^N$; within the framework of eventually positive semigroups, one can, for instance, use \cite[Theorem~3.1]{DanersGlueck2018b} to see this.)
	
	The fact that one does not have domination for all times can be easily checked by employing form methods: 
	one can simply use the characterisation of domination given in \cite[Theorem~2.21]{Ouhabaz2005}, since the form domain $\dom{a}$ introduced above is not an ideal (in the sense of \cite[Definition~2.19]{Ouhabaz2005}) of the form domain $H^1(-1,1)$ of the sesqui-linear form associated with $-\Delta^N$.
\end{proof}

\begin{remark}
	With essentially the same argument, one could show that the Laplace
	operator with periodic boundary conditions eventually dominates the
	Laplace operator with anti-symmetric boundary conditions.
	However, in this case, the domination even holds for all times, as was
	shown in \cite[Example~2]{Ouhabaz1996}.
\end{remark}

\subsection{Laplace operators with non-local boundary conditions}

Our next application concerns an example that is used in several articles about eventually positive semigroups and (anti-)maximum principles -- the Laplacian on an interval with certain non-local boundary conditions. Eventual positivity of the generated semigroup can be nicely demonstrated via two specific choices of non-local boundary conditions -- one in which the boundary conditions are symmetric and the other subject to non-symmetric boundary conditions; see, for instance,\cite[Section~11.7]{Glueck2016}. Here, we deal with the latter. 
We prove the following result.

\begin{theorem}
	\label{thm:non-local-semigroup-dominance}
	Let $\beta_1,\beta_2\in \bbR$. On $E:=L^2(0,\pi)$, we consider the Laplace operators $\Delta_i$ with domains
	\[
		\dom{\Delta_i} =\{f\in H^2(0,\pi) : f'(0)=0, f'(\pi)=\beta_i f(0)\}
	\]
	for $i=1,2$.
	Let $\one\in E$ denote the constant function taking the value one and 
	assume that $-\frac12<\beta_1<\beta_2<0$. 
	Then there exists a time $\tau\geq 0$ such that
	\[
		 e^{t\Delta_2}\modulus{f} - \modulus{e^{t\Delta_1}f}  \succeq  (\one \otimes \one) \modulus{f} 
	\]
	for all $t\geq \tau$ and all non-zero $f\in E$.
		
	Moreover, for each $0\neq f\in E$, we have
	\[
		\Res(\lambda, \Delta_2)\modulus{f}- \modulus{\Res(\lambda, \Delta_1)f}\succeq \one \quad \text{ and }\quad \Res(\mu, \Delta_2)\modulus{f}+\modulus{\Res(\mu, \Delta_1)f}\preceq -\one
	\]
	 for all $\lambda$ in an $f$-dependent right neighbourhood of $\spb(\Delta_2)$ and for all $\mu$ in an $f$-dependent left neighbourhood of $\spb(\Delta_2)$.	
\end{theorem}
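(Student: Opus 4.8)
The plan is to verify that both of our abstract theorems apply with $A = \Delta_1$, $B = \Delta_2$, and $u = \varphi = \one$. First I would compute the relevant spectral data. Each $\Delta_i$ has compact resolvent (since $H^2(0,\pi)$ embeds compactly into $L^2(0,\pi)$), so the spectrum consists of eigenvalues; a direct ODE computation with the boundary conditions $f'(0) = 0$, $f'(\pi) = \beta_i f(0)$ shows that the leading eigenvalue is simple, real, and strictly negative when $-\tfrac12 < \beta_i < 0$, with a corresponding eigenfunction that is strictly positive on $[0,\pi]$; moreover one checks $\spb(\Delta_1) < \spb(\Delta_2) < 0$ using $\beta_1 < \beta_2$ (the leading eigenvalue is a monotone function of $\beta$). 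One should also record that each $\Delta_i$ generates a $C_0$-semigroup that is eventually positive and, in fact, individually eventually strongly positive with respect to $\one$ — this is exactly the setting of \cite[Section~11.7]{Glueck2016} (or one applies \cite[Theorem~3.1]{DanersGlueck2018b}), which also gives the smoothing property $e^{t\Delta_i}E \subseteq E_{\one} = E$ and the analogous dual statement on $E' = L^2(0,\pi)$, so assumption~(a) of Theorem~\ref{thm:domination-semigroups-uniform} holds.

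For the semigroup domination, I would rescale by $\spb(\Delta_2)$ and then check the hypotheses of Theorem~\ref{thm:domination-semigroups-uniform} (in the rescaled form of Remark~\ref{remark:domination-semigroups-uniform}(d)): assumption~(a) as above; assumption~(b), i.e. $e^{t(\Delta_2 - \spb(\Delta_2))} \ge c(\one \otimes \one)$ for all large $t$, which is precisely uniform eventual strong positivity of $(e^{t\Delta_2})_{t\ge0}$ with respect to $\one$ — again available from \cite[Theorem~3.1]{DanersGlueck2018b} via the conditions in Remark~\ref{remark:domination-semigroups-uniform}(b); and assumption~(c) in rescaled form, namely that $(e^{t(\Delta_1 - \spb(\Delta_2))})_{t\ge0}$ converges to $0$ in operator norm. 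The last point is where some care is needed: since $\spb(\Delta_1) < \spb(\Delta_2)$ and $\spb(\Delta_2)$ is a dominant spectral value of $\Delta_2$ with $\spb(\Delta_1)$ dominant for $\Delta_1$, the rescaled semigroup $e^{t(\Delta_1 - \spb(\Delta_2))}$ has spectral bound $\spb(\Delta_1) - \spb(\Delta_2) < 0$; one then invokes the fact (true here because the semigroup is eventually norm-continuous — indeed eventually it maps into $E_{\one}$ and is differentiable, or one uses that $\Delta_1$ generates an analytic semigroup) that the growth bound equals the spectral bound, so the rescaled semigroup is uniformly exponentially stable and in particular converges to $0$ in operator norm. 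Theorem~\ref{thm:domination-semigroups-uniform} then yields the claimed estimate $e^{t\Delta_2}\modulus{f} - \modulus{e^{t\Delta_1}f} \succeq (\one\otimes\one)\modulus{f}$ for all $t \ge \tau$ and all $f \in E$, which in particular covers all non-zero $f$.

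For the resolvent statement, I would apply Theorem~\ref{thm:domination-resolvents-individual} with $A = \Delta_1$, $B = \Delta_2$, $u = \one$, and $\lambda_0 = \spb(\Delta_2)$. Assumption~(a) there, $\dom{\Delta_1}, \dom{\Delta_2} \subseteq E_{\one} = E$, is trivial since $E_{\one} = L^2(0,\pi)$. For $\lambda_0 = \spb(\Delta_2)$ to be a pole of $\Res(\argument,\Delta_2)$: it is, because $\Delta_2$ has compact resolvent, so every spectral value is an eigenvalue and a pole. Assumption~(b), that the spectral projection $P$ onto the $\spb(\Delta_2)$-eigenspace satisfies $Pf \succeq \one$ for all $0 \lneq f$: this follows from \cite[Theorem~5.1]{DanersGlueck2017} together with individual eventual strong positivity of $(e^{t\Delta_2})_{t\ge 0}$ with respect to $\one$ (already established), or more directly from \cite[Corollary~3.3]{DanersGlueckKennedy2016b} using that $\spb(\Delta_2)$ is a simple dominant eigenvalue with strictly positive eigenfunction $\succeq \one$ and strictly positive dual eigenfunction. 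Finally $\spb(\Delta_2) \in \rho(\Delta_1)$ because $\spb(\Delta_1) < \spb(\Delta_2)$ and $\spb(\Delta_2)$ — being strictly between $\spb(\Delta_1)$ and the next eigenvalue of $\Delta_1$, or simply not an eigenvalue of $\Delta_1$ by the explicit eigenvalue formula — lies in the resolvent set of $\Delta_1$. Theorem~\ref{thm:domination-resolvents-individual} then gives both resolvent inequalities on the appropriate one-sided neighbourhoods of $\spb(\Delta_2)$.

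The main obstacle is not conceptual but computational: carrying out the ODE analysis of the eigenvalue problem for $\Delta_i$ carefully enough to confirm that for $-\tfrac12 < \beta_i < 0$ the leading eigenvalue is simple, negative, strictly increasing in $\beta$ (hence $\spb(\Delta_1) < \spb(\Delta_2)$), that $\spb(\Delta_2)$ is a dominant spectral value with a strictly positive eigenfunction, and that $\spb(\Delta_2)$ is not in the spectrum of $\Delta_1$. Much of this can be imported wholesale from the treatment of this example in \cite[Section~11.7]{Glueck2016}, so in the write-up I would cite that reference for the spectral and eventual-positivity facts and only spell out the comparison $\spb(\Delta_1) < \spb(\Delta_2)$ and the membership $\spb(\Delta_2) \in \rho(\Delta_1)$, which follow from the monotone dependence of the leading eigenvalue on $\beta$.
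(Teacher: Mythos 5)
Your overall strategy coincides with the paper's: verify Theorem~\ref{thm:domination-semigroups-uniform} (after rescaling by $\spb(\Delta_2)$, as in Remark~\ref{remark:domination-semigroups-uniform}(d)) for the semigroup estimate and Theorem~\ref{thm:domination-resolvents-individual} for the resolvent estimate, with $u=\varphi=\one$, importing the spectral analysis from \cite[Theorem~11.7.4]{Glueck2016} and using the monotone dependence of the leading eigenvalue on $\beta$ (in the paper: $\mu\mapsto\mu\sin(\mu\pi)$ is strictly increasing on $(0,\tfrac12)$) to obtain $\spb(\Delta_1)<\spb(\Delta_2)<0$, hence operator-norm decay of the rescaled dominated semigroup via analyticity and $\spb(\Delta_2)\in\rho(\Delta_1)$.

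There is, however, a genuine error in how you dispose of the domination and smoothing hypotheses: you assert twice that $E_{\one}=E=L^2(0,\pi)$ and conclude that assumption~(a) of Theorem~\ref{thm:domination-resolvents-individual} is ``trivial'' and that the smoothing assumption~(a) of Theorem~\ref{thm:domination-semigroups-uniform} (together with its dual counterpart) is automatic. This is false: the principal ideal generated by $\one$ in $L^2(0,\pi)$ is $L^\infty(0,\pi)$, a proper dense subspace. The identity $E_u=E$ holds for quasi-interior points of $C(K)$, not of $L^p$ for $p<\infty$. If $E_{\one}$ really equalled $E$, the ultracontractivity-type hypotheses in these theorems would be vacuous for every semigroup on $L^2$, which they are not; they are exactly the non-trivial input. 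The hypotheses do hold here, but for a substantive reason: $\dom{\Delta_i}\subseteq H^2(0,\pi)\subseteq H^1(0,\pi)\hookrightarrow L^\infty(0,\pi)=E_{\one}$ by Sobolev embedding, and analyticity gives $e^{t\Delta_i}E\subseteq\bigcap_{n}\dom{\Delta_i^n}\subseteq E_{\one}$ for $t>0$; the dual smoothing statement is not automatic from $E'\cong L^2(0,\pi)$ either, but follows because $\Delta_1'=\Delta_1^*$ is an operator of the same type (with the endpoints of the interval swapped), so the same Sobolev argument applies to it. With these corrections your argument agrees with the paper's proof.
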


For a fixed $\beta\in\bbR$, the operator $\Delta_{\beta}$ introduced above was considered in \cite[Theorem~11.7.4]{Glueck2016}, where necessary and sufficient conditions were given for the corresponding semigroup to be positive and individually eventually positive. It can actually be shown using \cite[Theorem~3.1]{DanersGlueck2018b} that the eventual positivity assertion there is uniform. In fact, with methods similar to \cite[Proposition~6.3]{AroraGlueck2021b}, one can even prove uniform (anti-)maximum principles at the spectral bound.
We mention that the operator considered here is a slight modification of the simple thermostat models considered in \cite{GuidottiMerino2000,GuidottiMerino1997}.

\begin{proof}[Proof of Theorem~\ref{thm:non-local-semigroup-dominance}]
	Firstly, the semigroup is analytic and
	\[
		\dom{\Delta_1} \subseteq H^1(0,\pi)\subseteq L^\infty(0,\pi) = E_{\one},
	\]
	so, 
	\[
		e^{t\Delta_1} E\subseteq \cap_{n\in\bbN} \dom{\Delta_1^n} \subseteq E_u
	\]
	for all $t>0$. Also, because $\Delta_1$ is real, the Hilbert space adjoint $\Delta_1^*$ coincides with the Banach space dual $\Delta_1'$ (one can find this argument in, for example, \cite[page~46]{DanersGlueck2018b}). Moreover, the structure of $\Delta_1^*$ is same as that of $\Delta_1$ with switched end points of the interval. It follows that assumption~(a) of Theorem~\ref{thm:domination-semigroups-uniform} is satisfied for $A:=\Delta_1-\spb(\Delta_2)$. Moreover, assumption~(b) of Theorem~\ref{thm:domination-semigroups-uniform} is also satisfied with $B:=\Delta_2-\spb(\Delta_2)$ (see Remark~\ref{remark:domination-semigroups-uniform}(b)) and $u=\varphi=\one$. 
	Indeed, the proof is exactly the same as that of \cite[Theorem~4.3]{DanersGlueck2018b} (the only difference is that one needs to use \cite[Theorem~11.7.4]{Glueck2016} instead of \cite[Theorem~6.10]{DanersGlueckKennedy2016b}).
	
	Since $A$ generates an analytic semigroup, the growth bound of the semigroup coincides with $\spb(A)$.
	So, if we are able to show that $\spb(A)<0$, then assumption~(c) of Theorem~\ref{thm:domination-semigroups-uniform} also follows. 
	To see that indeed $\spb(A) < 0$, we make use of the computations performed in \cite[Theorem~11.7.4(c)]{Glueck2016} -- which not only show that both $\spb(\Delta_1)$ and $\spb(\Delta_2)$ are negative, but also that the numbers $\mu_i=\sqrt{-\spb(\Delta_i)}>0$ (for $i=1,2$) are the only numbers in $(0, 1/2)$ that satisfy the equations
	\[
		\mu_1\sin(\mu_1\pi)=-\beta_1 \qquad \text{and}\qquad\mu_2\sin(\mu_2\pi)=-\beta_2,
	\]
	respectively. Since the function
	\[
		\mu\mapsto\mu\sin(\mu \pi)
	\]
	is strictly increasing on $(0,\frac12)$ and $\beta_1<\beta_2$, it follows that $\mu_1>\mu_2$ and so $\spb(\Delta_1)<\spb(\Delta_2)$. Hence, we have shown that $\spb(A)<0$.
	
	Thus, we infer from Theorem~\ref{thm:domination-semigroups-uniform}, the existence of a time $\tau>0$ such that
	\[
		e^{t(\Delta_2-\spb(\Delta_2))}\modulus{f}- \modulus{e^{t(\Delta_1-\spb(\Delta_2))}f} \succeq (\one \otimes \one) \modulus{f}
	\]
	for all $t\geq \tau$ and all non-zero $f\in E$, which proves the first conclusion.
	
	Lastly, the inequality $\spb(\Delta_1)<\spb(\Delta_2)$ above also implies that $\spb(\Delta_2)\in \rho(\Delta_1)$. 
	Moreover, the fact that the spectral projection $P$ of $\Delta_2$ associated with $\spb(\Delta_2)$ satisfies $Pf\succeq \one$ for all $0\lneq f\in E$ follows from \cite[Theorem~11.7.4]{Glueck2016} (use \cite[Corollary~3.3]{DanersGlueckKennedy2016b}). 
	The second assertion is thus a consequence of Theorem~\ref{thm:domination-resolvents-individual}.
\end{proof}

\subsection{Dirichlet and Neumann boundary conditions -- what is (eventually) in between?}

The question of which semigroups are sandwiched between the Laplace operators with Dirichlet and Neumann boundary conditions dates back to Arendt and Warma \cite{ArendtWarma2003}. Under locality and regularity assumptions, it was shown that the sandwiched semigroup is generated by a Laplace operator with general Robin boundary conditions. Later, it was shown by Akhlil \cite{Khalid2018} that the assumption of the locality of the form is redundant. More recently, Chill, Djida, and the first author revisited domination of semigroups generated by regular forms. The methods used there also show that the sandwiched semigroup must correspond to a local form; see \cite[Section~4.1]{AroraChillDjida2021}. 
Here, we show that the semigroup generated by the operator considered in \cite[Example~4.5]{ArendtWarma2003} is eventually sandwiched between the Dirichlet and Neumann semigroups. Additionally, we prove eventual domination of the resolvents using our results in Section~\ref{domination-resolvents}.

On $L^2(0,1)$ we consider the Dirichlet Laplacian $\Delta^D$ with domain
\[
	\dom{\Delta^D}=\{f\in H^2(0,1) : f(0)=f(1)=0\},
\]
the Neumann Laplacian $\Delta^N$ with domain
\[
	\dom{\Delta^N}=\{f\in H^2(0,1) : f'(0)=f'(1)=0\},
\]
and a Laplace operator $\Delta^{\nl}$ with non-local boundary conditions with domain
\[
	\dom{\Delta^{\nl}}=\{f\in H^2(0,1) : f'(0)=-f'(1)=f(0)+f(1)\}.
\]

It is well-known that both $\Delta^D$ and $\Delta^N$ generate positive analytic $C_0$-semigroups on $L^2(0,1)$. By contrast, while $\Delta^{\nl}$ does generate an analytic semigroup, the generated semigroup is not positive but merely uniformly eventually positive \cite[Theorem~4.2]{DanersGlueck2018b}. Moreover, uniform maximum and anti-maximum principles for the operator $\Delta^{\nl}$ were recently proved in \cite[Proposition~6.2]{AroraGlueck2021b}.

\begin{theorem}
	\label{thm:dtn-resolvent-dominance}
	For the Laplace operators considered above, we have the following domination of the resolvents.
	\begin{enumerate}[\upshape (i)]
		\item For each $0\neq f\in L^2(0,1)$, we have
			\[
				\Res(\lambda,\Delta^{\nl} )\modulus{f}- \modulus{\Res(\lambda, \Delta^D)f}\succeq \one
			\]
			and
			\[
				\Res(\mu, \Delta^{\nl})\modulus{f}+\modulus{\Res(\mu, \Delta^D)f}\preceq -\one
			\]
	 		for all $\lambda$ in an $f$-dependent right neighbourhood of $\spb(\Delta^{\nl})$ and for all $\mu$ in an $f$-dependent left neighbourhood of $\spb(\Delta^{\nl})$.
	 
	 	\item For each $0\neq f\in L^2(0,1)$, we have
			\[
				\Res(\lambda,\Delta^{N} )\modulus{f}- \modulus{\Res(\lambda, \Delta^{\nl})f}\succeq \one
			\]
			and
			\[
				\Res(\mu, \Delta^N)\modulus{f}+\modulus{\Res(\mu, \Delta^{\nl})f}\preceq -\one
			\]
	 		for all $\lambda$ in an $f$-dependent right neighbourhood of $0$ and for all $\mu$ in an $f$-dependent left neighbourhood of $0$.
	\end{enumerate}
\end{theorem}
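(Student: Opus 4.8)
The plan is to obtain both statements as direct applications of Theorem~\ref{thm:domination-resolvents-individual} with the quasi-interior point $u = \one$, choosing the operators $A$ and $B$ appropriately in each case and using that $E_{\one} = L^\infty(0,1)$ for $E = L^2(0,1)$.

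For part~(i) I would apply the theorem with $B = \Delta^{\nl}$, $A = \Delta^D$ and $\lambda_0 = \spb(\Delta^{\nl})$. Both operators are closed, densely defined, real, and have compact resolvent (their domains lie in $H^2(0,1)$, which embeds compactly into $L^2(0,1)$), so in particular $\spb(\Delta^{\nl})$ is a spectral value of $\Delta^{\nl}$ and a pole of its resolvent. Assumption~(a) holds because $\dom{\Delta^D}, \dom{\Delta^{\nl}} \subseteq H^2(0,1) \hookrightarrow H^1(0,1) \hookrightarrow L^\infty(0,1) = E_{\one}$. For assumption~(b) I would use that the semigroup generated by $\Delta^{\nl}$ is uniformly -- hence individually -- eventually strongly positive with respect to $\one$ (see \cite[Theorem~4.2]{DanersGlueck2018b}, and also \cite[Proposition~6.2]{AroraGlueck2021b}); since $\spb(\Delta^{\nl})$ is a spectral value and a pole of the resolvent, \cite[Theorem~5.1]{DanersGlueck2017} then yields that the associated spectral projection $P$ satisfies $Pf \succeq \one$ for every $0 \lneq f \in L^2(0,1)$. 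Finally, $\lambda_0 \in \rho(\Delta^D)$: indeed $\spec(\Delta^D) = \{-k^2\pi^2 : k \in \bbN\}$, while the spectral bound of $\Delta^{\nl}$, as determined in \cite[Theorem~4.2]{DanersGlueck2018b}, is a negative real number strictly greater than $-\pi^2$. Theorem~\ref{thm:domination-resolvents-individual} then gives precisely the two displayed estimates of part~(i).

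For part~(ii) I would apply Theorem~\ref{thm:domination-resolvents-individual} with $B = \Delta^N$, $A = \Delta^{\nl}$ and $\lambda_0 = 0 = \spb(\Delta^N)$. Assumption~(a) is verified exactly as above, and $\Delta^N$ has compact resolvent, so $0$ is a spectral value of $\Delta^N$ and a pole of $\Res(\argument, \Delta^N)$. The Neumann Laplacian generates a positive irreducible semigroup whose leading simple eigenvalue $0$ has eigenfunction $\one$; the associated spectral projection is $P = \one \otimes \varphi$, where $\varphi$ is a strictly positive multiple of Lebesgue integration, so $Pf = \duality{\varphi}{f}\,\one \succeq \one$ for all $0 \lneq f$, which is assumption~(b). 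It remains to check $0 \in \rho(\Delta^{\nl})$: a function with $f'' = 0$ is affine, $f(x) = ax + b$, and the boundary conditions $f'(0) = -f'(1) = f(0)+f(1)$ force $a = -a$ and $2b = a = 0$, hence $f = 0$; since $\Delta^{\nl}$ has compact resolvent, $0$ being a non-eigenvalue means $0 \in \rho(\Delta^{\nl})$. Theorem~\ref{thm:domination-resolvents-individual} now gives the two displayed estimates of part~(ii), completing the proof.

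The only steps that are not entirely routine are the verification of assumption~(b) for $\Delta^{\nl}$ in part~(i) and the location of $\spb(\Delta^{\nl})$ in the interval $(-\pi^2, 0)$; both rely on the known spectral analysis and eventual strong positivity of $\Delta^{\nl}$ from \cite{DanersGlueck2018b} (see also \cite{AroraGlueck2021b}), while everything else reduces to standard Sobolev embeddings and an elementary ODE argument.
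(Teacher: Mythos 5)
Your proposal is correct and follows essentially the same route as the paper: both parts are obtained by verifying the hypotheses of Theorem~\ref{thm:domination-resolvents-individual} with $u=\one$, taking $(A,B,\lambda_0)=(\Delta^D,\Delta^{\nl},\spb(\Delta^{\nl}))$ for part~(i) and $(\Delta^{\nl},\Delta^N,0)$ for part~(ii). The only differences are in the bookkeeping: you verify assumption~(b) for $\Delta^{\nl}$ via eventual strong positivity of the semigroup together with \cite[Theorem~5.1]{DanersGlueck2017} rather than via the (anti-)maximum principle results of \cite{AroraGlueck2021b}, and you establish $0\in\rho(\Delta^{\nl})$ and $\spb(\Delta^{\nl})\in\rho(\Delta^D)$ by direct spectral computations where the paper instead cites the known comparisons $\spb(\Delta^D)<\spb(\Delta^{\nl})<0$.
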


\begin{proof}
	We simultaneously verify the assumptions of Theorem~\ref{thm:domination-resolvents-individual} for both parts. To begin, note that the domain of each of the three operators lies inside $L^{\infty}(0,1)=L^2(0,1)_{\one}$. Also, each of the domains embeds into $H^1(0,1)\hookrightarrow L^2(0,1)$ and the latter embedding is compact. Therefore, each of the operators has compact resolvent (in particular, each spectral value is a pole of the corresponding resolvent).
	
	Now, it was shown in \cite[Theorem~4.4]{GlueckMugnolo2021} that $\spb(\Delta^D)<\spb(\Delta^{\nl})$ and in \cite[Lemma~6.9]{DanersGlueckKennedy2016b} that $\spb(\Delta^{\nl})<0$. Moreover, using the boundary conditions, one can check that $\spb(\Delta^N)=0$. It follows that $\spb(\Delta^{\nl})\in \rho(\Delta^D)$ and $0\in\rho(\Delta^{\nl})$. In addition, assumption~(b) in Theorem~\ref{thm:domination-resolvents-individual} for the operators $\Delta^N$ and $\Delta^{\nl}$ was verified in \cite[Propositions~6.1(b) and~6.2]{AroraGlueck2021b}, respectively (more precisely, in \cite[Propositions~6.1(b) and~6.2]{AroraGlueck2021b} a property was verified that is, according to \cite[Corollary~3.3]{DanersGlueckKennedy2016b}, equivalent to assumption~(b) in Theorem~\ref{thm:domination-resolvents-individual}).
	
	With the above observations, the assertions readily follow from Theorem~\ref{thm:domination-resolvents-individual}.
\end{proof}

As announced before, we are going to show that the semigroup generated by $\Delta^{\nl}$ is eventually sandwiched between the Dirichlet and the Neumann semigroup. We point out that the eventual domination of the Dirichlet semigroup by the semigroup generated by $\Delta^{\nl}$ is already known from \cite[Theorem~4.4]{GlueckMugnolo2021}.

\begin{theorem}
	There exists a time $t_0\geq 0$ such that $e^{t\Delta^D}\leq e^{t\Delta^{\nl}}\leq e^{t\Delta^N}$ for all $t\geq t_0$.
\end{theorem}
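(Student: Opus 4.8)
The statement is the conjunction of two uniform eventual domination results: that $(e^{t\Delta^{\nl}})_{t \ge 0}$ uniformly eventually dominates $(e^{t\Delta^D})_{t \ge 0}$ and that $(e^{t\Delta^N})_{t \ge 0}$ uniformly eventually dominates $(e^{t\Delta^{\nl}})_{t \ge 0}$; the former is already known from \cite[Theorem~4.4]{GlueckMugnolo2021}, but the plan is to obtain both in the same way. I would apply Theorem~\ref{thm:domination-semigroups-uniform} twice with $u = \varphi = \one$: once with $A = \Delta^D$, $B = \Delta^{\nl}$ (invoking the rescaling from Remark~\ref{remark:domination-semigroups-uniform}(d), since $\spb(\Delta^{\nl}) < 0$), and once with $A = \Delta^{\nl}$, $B = \Delta^N$ (no rescaling needed, as $\spb(\Delta^N) = 0$).

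Checking the hypotheses of Theorem~\ref{thm:domination-semigroups-uniform} in each case is essentially routine. For the smoothing assumption~(a): all three operators have domains contained in $H^2(0,1) \hookrightarrow L^\infty(0,1) = L^2(0,1)_{\one}$, and since each generates an analytic semigroup, iterating gives $e^{t\Delta}E \subseteq \bigcap_{n} \dom{\Delta^n} \subseteq E_{\one}$ for every $t > 0$; the same holds for the Banach space adjoints, because $\Delta^D$ and $\Delta^N$ are self-adjoint while $(\Delta^{\nl})' = (\Delta^{\nl})^*$ has the same structural form as $\Delta^{\nl}$ with the interval endpoints swapped, hence again a domain in $H^2 \hookrightarrow L^\infty = (L^2(0,1))_{\one}$. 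For assumption~(c): $\spb(\Delta^{\nl}) < 0$ by \cite[Lemma~6.9]{DanersGlueckKennedy2016b} and $\spb(\Delta^D) < \spb(\Delta^{\nl})$ by \cite[Theorem~4.4]{GlueckMugnolo2021}, so both $\Delta^{\nl}$ and $\Delta^D - \spb(\Delta^{\nl})$ have strictly negative spectral bound; analyticity forces the growth bounds to coincide with the spectral bounds, so the relevant semigroups converge to $0$ in operator norm.

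The one point requiring a little care is assumption~(b), i.e.\ uniform eventual \emph{strong} positivity of the two dominating semigroups with a rank-one lower bound. For $B = \Delta^N$ this follows from Remark~\ref{remark:domination-semigroups-uniform}(b): the spectral bound $0$ is a dominant spectral value, the eigenspace is spanned by $\one \succeq \one$, the dual eigenspace is spanned by the strictly positive integration functional (which is $\succeq \one$), and the smoothing conditions have just been verified -- this is essentially \cite[Theorem~3.1]{DanersGlueck2018b} applied to $\Delta^N$. For $B = \Delta^{\nl}$, assumption~(b) for the rescaled operator $\Delta^{\nl} - \spb(\Delta^{\nl})$ is exactly the uniform eventual strong positivity proved in \cite[Theorem~4.2]{DanersGlueck2018b} (itself obtained via \cite[Theorem~3.1]{DanersGlueck2018b}), so nothing new is needed.

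With the hypotheses in hand, Theorem~\ref{thm:domination-semigroups-uniform} provides $\tau_1, \tau_2 \ge 0$ such that for all $f \in L^2(0,1)$,
\[
	e^{t\Delta^{\nl}}\modulus{f} - \modulus{e^{t\Delta^D}f} \succeq e^{t\spb(\Delta^{\nl})}\,(\one \otimes \one)\modulus{f} \ge 0 \qquad (t \ge \tau_1),
\]
\[
	e^{t\Delta^N}\modulus{f} - \modulus{e^{t\Delta^{\nl}}f} \succeq (\one \otimes \one)\modulus{f} \ge 0 \qquad (t \ge \tau_2).
\]
Restricting to $0 \le f$ and using that $\Delta^D$ generates a positive semigroup (so $\modulus{e^{t\Delta^D}f} = e^{t\Delta^D}f$), the first estimate gives $e^{t\Delta^D}f \le e^{t\Delta^{\nl}}f$; and from $e^{t\Delta^{\nl}}f \le \modulus{e^{t\Delta^{\nl}}f}$ the second gives $e^{t\Delta^{\nl}}f \le e^{t\Delta^N}f$. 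Taking $t_0 := \max\{\tau_1, \tau_2\}$ completes the proof. I expect no real obstacle here beyond bookkeeping the rescaling and citing the eventual-strong-positivity input correctly; the hardest-to-quote ingredient is assumption~(b) for $\Delta^{\nl}$, but it is already in the literature.
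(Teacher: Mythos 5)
Your proof is correct and follows essentially the same route as the paper: the key step is applying Theorem~\ref{thm:domination-semigroups-uniform} with $u=\varphi=\one$ to get $e^{t\Delta^{\nl}}\le e^{t\Delta^N}$, using the smoothing into $H^2\hookrightarrow L^\infty=L^2(0,1)_{\one}$, Remark~\ref{remark:domination-semigroups-uniform}(b) for the Neumann semigroup, and $\spb(\Delta^{\nl})<0$ plus analyticity for operator-norm decay. The only (harmless) difference is that you re-derive the first inequality $e^{t\Delta^D}\le e^{t\Delta^{\nl}}$ from Theorem~\ref{thm:domination-semigroups-uniform} with the rescaling of Remark~\ref{remark:domination-semigroups-uniform}(d), whereas the paper simply cites \cite[Theorem~4.4]{GlueckMugnolo2021} for it; also note that $\Delta^{\nl}$ is in fact self-adjoint, so your remark about the adjoint having ``swapped endpoints'' is unnecessary.
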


\begin{proof}
	First of all, it was already proved in \cite[Theorem~4.4]{GlueckMugnolo2021} that there exists a time $t_1\geq 0$ such that
	\[
		e^{t\Delta^D}\leq e^{t\Delta^{\nl}}
	\]
	for all $t\geq t_1$.
	
	For the second estimate, we verify the assumptions of Theorem~\ref{thm:domination-semigroups-uniform}. Note that both the operators $\Delta^N$ and $\Delta^{\nl}$ are self-adjoint. Moreover, the corresponding semigroups map $L^2(0,1)$ into $H^2(0,1)\subseteq L^{\infty}(0,1)=L^2(0,1)_{\one}$.
	
	Next, note that for the Neumann Laplacian, the spectral bound $\spb(\Delta^N)=0$ is a dominant spectral value and the corresponding eigenspace is spanned by $\one$. Therefore by Remark~\ref{remark:domination-semigroups-uniform}(b), we obtain that $\Delta^N$ satisfies the assumption (b) of Theorem~\ref{thm:domination-semigroups-uniform}. 
	On the other hand, 
	because $\Delta^{\nl}$ generates an analytic semigroup, the growth bound of this semigroup satisfies $\gbd(\Delta^{\nl})=\spb(\Delta^{\nl})$. Moreover, it was shown in \cite[Lemma~6.9]{DanersGlueckKennedy2016b} that $\spb(\Delta^{\nl})<0$.
	In particular, the semigroup generated by $\Delta^{\nl}$ converges to $0$ with respect to the operator norm as $t \to \infty$. 
	
	We now conclude from Theorem~\ref{thm:domination-semigroups-uniform} that there exists $t_2\geq 0$ such that
	\[
		e^{t\Delta^D}\leq e^{t\Delta^{\nl}} \quad \text{and}\quad e^{s\Delta^{\nl}}\leq e^{s\Delta^N}
	\]
	for all $t\geq t_1$ and for all $s\geq t_2$; the assertion is now immediate.
\end{proof}

\subsection{Bi-Laplace operator with Wentzell boundary conditions}

Let $\Omega\subseteq \bbR^d$ be a bounded domain with Lipschitz boundary, where $\Omega$ is equipped with the Lebesgue measure and its boundary is equipped with the surface measure $S$. Fix functions $\alpha \in L^{\infty}(\Omega,\bbR)$ and $\beta,\gamma_1,\gamma_2 \in L^{\infty}(\partial\Omega,\bbR)$ such that there is a number $\eta > 0$ satisfying $\eta \leq \alpha$ and $\eta\leq \beta$ almost everywhere. Moreover, we assume that $\gamma_1,\gamma_2 \geq 0$.

Our state space is the Hilbert lattice
\[
	H:= L^2(\Omega)\times L^2(\partial\Omega, \beta^{-1}dS)
\]
equipped with the inner product
\[
	\duality{u}{v} = \int_{\Omega} u_1 \overline{v_1}\dx x  + \int_{\partial\Omega} u_2\overline{v_2}\beta^{-1}\dx S 
\]
for $u=(u_1,u_2)$ and $v=(v_1,v_2)$ in $H$, and we consider the operators $-A_i$ associated with the forms
\[
	a_i((u_1,u_2),(v_1,v_2)) := \duality{(\alpha\Delta u_1, \gamma_i u_2)}{(\Delta v_1, v_2)}
\]
with form domains
\[
	\dom{a_i} := \Big\{ (u_1,u_2)\in H: u_1\in \dom{\Delta^N}, u_2=\tr u_1 \Big\}
\]
for $i=1,2$; here $\Delta^N$ denotes the Neumann Laplacian on $\Omega$. It was shown in \cite[Theorem~3.4]{DenkKunzePloss2021} that the operators $A_1$ and  $A_2$ are self-adjoint and generate analytic and contractive $C_0$-semigroups on $H$. In \cite{DenkKunzePloss2021}, it was also shown that the semigroups are not positive but eventually positive if $\gamma_i=0$. Combining the results of the aforementioned reference with our current results, we are able to obtain eventual domination for both semigroups and resolvents:

\begin{theorem}
	Let $\gamma_2=0$ almost everywhere and let $\gamma_1 \geq 0$ be non-zero on a set of non-zero measure. 
	In addition, set $\one:= (\one_{\Omega},\one_{\partial\Omega})$. 
	Then there exists $\tau\geq 0$ such that 
	\[
		e^{tA_2} \modulus{f} - \modulus{e^{tA_1}f}  \succeq  (\one\otimes \one) \modulus{f}
	\]
	for all $t\geq \tau$ and all $f\in E$.
\end{theorem}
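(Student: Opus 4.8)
The plan is to verify the three hypotheses of Theorem~\ref{thm:domination-semigroups-uniform} for the dominated operator $A := A_1$, the dominating operator $B := A_2$, and the choice $u = \varphi = \one = (\one_\Omega, \one_{\partial\Omega})$. Since $H$ is a Hilbert lattice we identify $H'$ with $H$; then $\one$ is simultaneously a quasi-interior point of $H$ and a strictly positive functional on $H$, and since $A_1, A_2$ are real and self-adjoint, each coincides with its Banach space dual $A_i'$ (cf.\ \cite{DanersGlueck2018b}). First I would pin down the spectral picture of $A_2$. Because $\gamma_2 = 0$, the form reduces to $a_2((u_1,u_2),(u_1,u_2)) = \int_\Omega \alpha \modulus{\Delta u_1}^2 \dx x$, and as $\alpha \ge \eta > 0$ this vanishes precisely when $u_1$ is harmonic with Neumann boundary data, i.e.\ when $u_1$ is a constant; since then $u_2 = \tr u_1$ is that same constant, we get $\ker A_2 = \linSpan\{\one\}$, and as $A_2 \le 0$ it follows that $\spb(A_2) = 0$ with $0$ an eigenvalue. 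The form domains $\dom{a_i}$ embed compactly into $H$ (via compactness of $H^1(\Omega) \hookrightarrow L^2(\Omega)$ and of the trace $H^1(\Omega) \to L^2(\partial\Omega)$), so both $A_i$ have compact resolvent; in particular every spectral value is a pole of the resolvent, $\spb(A_2)=0$ is a dominant spectral value of $A_2$, and the dual eigenspace is $\ker A_2' = \ker A_2 = \linSpan\{\one\}$.

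Next I would check assumption~(a). The semigroups are analytic, so $e^{tA_i}H \subseteq \bigcap_{n\ge 1}\dom{A_i^n}$ for $t > 0$, and a bootstrap in elliptic regularity places $\dom{A_i^n}$, for $n$ large enough, inside $C(\overline{\Omega}) \times C(\partial\Omega) \subseteq H_{\one}$ by Sobolev embedding; because $A_i' = A_i$, this simultaneously gives $e^{tA_i'}H' \subseteq (H')_{\one}$. Rather than redo the regularity theory, I would import this from \cite{DenkKunzePloss2021}, where exactly this kind of smoothing was established for these operators while proving eventual positivity in the case $\gamma_i = 0$ (the boundary term $\gamma_1 u_2$ does not affect the relevant interior regularity). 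Assumption~(b) then follows from Remark~\ref{remark:domination-semigroups-uniform}(b): its condition~(i) is the smoothing just discussed applied to $B = A_2$, and its condition~(ii) holds since $\spb(A_2) = 0$ is dominant with eigenvector $v = \one \succeq \one$ and dual eigenvector $\psi = \one \succeq \one$; note that no rescaling is needed because $\spb(B) = 0$.

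It remains to verify assumption~(c): as $\spb(A_2) = 0$, it suffices that $(e^{tA_1})_{t\ge 0}$ tends to $0$ in operator norm, and since $A_1$ generates an analytic semigroup this amounts to $\spb(A_1) < 0$. This is where the hypotheses on $\gamma_1$ enter: from $a_1((u_1,u_2),(u_1,u_2)) = \int_\Omega \alpha \modulus{\Delta u_1}^2 \dx x + \int_{\partial\Omega} \gamma_1 \modulus{u_2}^2 \beta^{-1} \dx S$ one sees that a null vector of $a_1$ has $u_1$ constant and $\gamma_1 \modulus{u_2}^2 = 0$ almost everywhere on $\partial\Omega$; since $u_2$ equals that same constant and $\gamma_1$ is positive on a set of positive measure, the constant vanishes, so $A_1$ is injective. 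Being self-adjoint with compact resolvent, $A_1$ is therefore invertible, i.e.\ $0 \in \rho(A_1)$, and together with $A_1 \le 0$ this gives $\spb(A_1) < 0$. With (a)--(c) verified, Theorem~\ref{thm:domination-semigroups-uniform} (applied with $u = \varphi = \one$, $A = A_1$, $B = A_2$) yields a time $\tau \ge 0$ such that $e^{tA_2}\modulus{f} - \modulus{e^{tA_1}f} \succeq (\one\otimes\one)\modulus{f}$ for all $t \ge \tau$ and all $f \in H$, which is the claim.

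The main obstacle is assumption~(a): a self-contained proof that the powers of $A_1$ and $A_2$ smooth arbitrary $L^2$-data into $H_{\one}$ requires $L^\infty$-type elliptic regularity for a fourth-order operator with Wentzell-style boundary conditions on a merely Lipschitz domain, which is genuinely technical; my plan is to take it from \cite{DenkKunzePloss2021}. By comparison, the spectral description of $A_2$, the dominance of $\spb(A_2) = 0$, and the strict inequality $\spb(A_1) < 0$ are short form-theoretic computations once the compactness of the resolvents is in hand.
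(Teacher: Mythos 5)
Your proposal is correct and follows essentially the same route as the paper: both verify hypotheses (a)--(c) of Theorem~\ref{thm:domination-semigroups-uniform} with $A=A_1$, $B=A_2$, $u=\varphi=\one$, use analyticity plus the smoothing result \cite[Theorem~5.4]{DenkKunzePloss2021} for assumption~(a), and invoke Remark~\ref{remark:domination-semigroups-uniform}(b) together with self-adjointness for assumption~(b). The only difference is cosmetic: where you derive $\spb(A_2)=0$ with eigenspace $\linSpan\{\one\}$ and $\spb(A_1)<0$ by direct form computations, the paper simply cites \cite[Lemma~6.4(i)]{DenkKunzePloss2021} and \cite[Theorem~6.5(ii)]{DenkKunzePloss2021} for these facts.
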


\begin{proof}
	Fix $i \in \{1,2\}$. Since the semigroups are analytic, we have
	\[
		e^{tA_i}H\subseteq \cap_{n\geq 0} \dom{A_i^n}
	\]
	for all $t>0$. Moreover, it follows from \cite[Theorem~5.4]{DenkKunzePloss2021} that $\cap_{n\geq 0} \dom{A_i^n}$ embeds into $L^{\infty}(\Omega)\times L^{\infty}(\partial\Omega)= H_{\one}$. By virtue of \cite[Lemma~6.4(i)]{DenkKunzePloss2021}, we have that $\spb(A_2)=0$ and the corresponding eigenspace is spanned by $\one$. Since the operators $A_1$ and $A_2$ are self-adjoint, it follows by Remark~\ref{remark:domination-semigroups-uniform}(b) that assumptions (a) and (b) of Theorem~\ref{thm:domination-semigroups-uniform} are satisfied. In fact, according to \cite[Theorem~6.5(ii)]{DenkKunzePloss2021}, assumption (c) is also satisfied. Thus the assertion follows.
\end{proof}

\begin{theorem}
	Let $\gamma_2=0$ almost everywhere and let $\gamma_1 \geq 0$ be non-zero on a set of non-zero measure. 
	In addition, set $\one:= (\one_{\Omega},\one_{\partial\Omega})$. If $d\leq 5$, 
	then for each $f\in E$, we have
	\[
		\Res(\lambda, A_2)\modulus{f}- \modulus{\Res(\lambda, A_1)f}\succeq \one \quad \text{ and }\quad \Res(\mu, A_2)\modulus{f}+\modulus{\Res(\mu, A_1)f}\preceq -\one
	\]
	 for all $\lambda$ in an $f$-dependent right neighbourhood of $0$ and for all $\mu$ in an $f$-dependent left neighbourhood of $0$.
\end{theorem}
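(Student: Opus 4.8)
The plan is to verify the hypotheses of Theorem~\ref{thm:domination-resolvents-individual} with $A := A_1$, $B := A_2$, $u := \one = (\one_\Omega, \one_{\partial\Omega})$, and $\lambda_0 := 0$, exactly as in the proof of the preceding semigroup result, but now also tracking the extra regularity constraint that forces the dimension restriction $d \le 5$. First I would note that since $A_1$ and $A_2$ are self-adjoint and generate analytic semigroups, and since (by the compactness coming from the Sobolev-type embedding established in \cite{DenkKunzePloss2021}) they have compact resolvents, every spectral value is a pole of the resolvent; in particular $0 = \spb(A_2)$ is a pole of $\Res(\argument, A_2)$, and by \cite[Lemma~6.4(i)]{DenkKunzePloss2021} the associated eigenspace is spanned by $\one$, which is a quasi-interior point of $H$. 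The strict inequality $\spb(A_1) < \spb(A_2) = 0$ — which follows from \cite[Theorem~6.5(ii)]{DenkKunzePloss2021} as used in the semigroup theorem — then gives $0 \in \rho(A_1)$, which is the hypothesis $\lambda_0 \in \rho(A)$ of Theorem~\ref{thm:domination-resolvents-individual}.

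Next I would check assumption~(a), the domination conditions $\dom{A_1}, \dom{A_2} \subseteq H_{\one}$. This is precisely where the restriction $d \le 5$ enters: one does not get $\dom{A_i} \subseteq L^\infty(\Omega) \times L^\infty(\partial\Omega)$ for free in all dimensions. The idea is to use the elliptic-regularity/Sobolev embedding results of \cite{DenkKunzePloss2021} describing $\dom{A_i}$ (or $\dom{A_i^n}$) in terms of Sobolev spaces on $\Omega$ and $\partial\Omega$; since $A_i$ is (morally) a fourth-order operator, $\dom{A_i}$ sits inside something like $H^4(\Omega)$-type spaces, and $H^4(\Omega) \hookrightarrow C(\overline\Omega) \hookrightarrow L^\infty(\Omega)$ holds by the Sobolev embedding theorem exactly when $4 > d/2$, i.e. $d \le 7$; the sharper bound $d \le 5$ presumably comes from the boundary component $L^2(\partial\Omega, \beta^{-1}dS)$, where the trace lives on a $(d-1)$-dimensional manifold and one needs the relevant boundary Sobolev space to embed into $L^\infty(\partial\Omega)$. (If the domain description is not strong enough in one step, I would instead use $e^{tA_i}H \subseteq \bigcap_{n\ge 0}\dom{A_i^n}$ for $t>0$ together with \cite[Theorem~5.4]{DenkKunzePloss2021}, but since Theorem~\ref{thm:domination-resolvents-individual} requires the domain inclusion for the \emph{generators}, not merely range inclusion for the semigroup, the direct regularity statement for $\dom{A_i}$ is what I would aim to extract, and this is the step where the precise value $d \le 5$ must be justified from \cite{DenkKunzePloss2021}.)

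Then assumption~(b) — that the spectral projection $P$ of $A_2$ at $0$ satisfies $Pf \succeq \one$ for all $0 \lneq f \in H$ — I would obtain from \cite[Lemma~6.4(i)]{DenkKunzePloss2021}, which identifies the eigenspace, combined with \cite[Corollary~3.3]{DanersGlueckKennedy2016b}: since $0$ is a dominant spectral value and a pole with one-dimensional eigenspace spanned by the quasi-interior point $\one$, and (self-adjointness) the dual eigenspace is also spanned by a strictly positive functional, the criterion of \cite[Corollary~3.3]{DanersGlueckKennedy2016b} for $Pf \succeq \one$ is met; this is the same verification already invoked in the earlier non-local-boundary and Wentzell semigroup proofs. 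With (a), (b), and $0 \in \rho(A_1)$ in hand, Theorem~\ref{thm:domination-resolvents-individual} applies verbatim and yields, for every $0 \ne f \in H$, both estimates
\[
	\Res(\lambda, A_2)\modulus{f} - \modulus{\Res(\lambda, A_1)f} \succeq \one
	\quad\text{and}\quad
	\Res(\mu, A_2)\modulus{f} + \modulus{\Res(\mu, A_1)f} \preceq -\one
\]
on a right, respectively left, $f$-dependent neighbourhood of $0 = \spb(A_2)$, which is the assertion.

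I expect the main obstacle to be the careful bookkeeping in assumption~(a): extracting from \cite{DenkKunzePloss2021} exactly which Sobolev spaces contain $\dom{A_1}$ and $\dom{A_2}$, checking that the relevant embeddings into $L^\infty(\Omega) \times L^\infty(\partial\Omega)$ hold under $d \le 5$ (and tracking whether the bottleneck is the interior $L^\infty(\Omega)$ estimate or the boundary $L^\infty(\partial\Omega)$ estimate), and making sure these hold for the generators and not just for the smoothed vectors $e^{tA_i}f$. Everything else is a routine invocation of the already-established abstract machinery.
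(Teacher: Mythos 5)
Your proposal is correct and follows essentially the same route as the paper: verify the hypotheses of Theorem~\ref{thm:domination-resolvents-individual} with $A=A_1$, $B=A_2$, $u=\one$, $\lambda_0=0$, using the regularity results of \cite{DenkKunzePloss2021} (the proof of Theorem~5.4 there) for the domain inclusions $\dom{A_i}\subseteq H_{\one}$ under $d\le 5$, Lemma~6.4 of that reference for the spectral data, and the self-adjointness of $A_2$ to get $Pf\succeq\one$. The only cosmetic differences are that the paper cites \cite[Lemma~6.4(ii)]{DenkKunzePloss2021} directly for $0\in\rho(A_1)$ and deduces $P \succeq \one\otimes\one$ from self-adjointness rather than via \cite[Corollary~3.3]{DanersGlueckKennedy2016b}.
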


\begin{proof}
	Since $d\leq 5$, the proof of \cite[Theorem~5.4]{DenkKunzePloss2021} implies that
	\[
		\dom{A_i}\subseteq L^{\infty}(\Omega)\times L^{\infty}(\partial\Omega)= H_{\one}\qquad (i=1,2).
	\]
	Moreover, by \cite[Lemma~6.4(i)]{DenkKunzePloss2021}, we have that $\spb(A_2)=0$ with corresponding eigenspace being spanned by $\one$. 
	Since the operator $A_2$ is self-adjoint, it follows that the spectral projection $P$ of $A_2$ associated with $0$ is a multiple of $\one \otimes \one$, and thus satisfies $Pf\succeq \one$ for all $0\lneq f\in E$. Lastly, we know from \cite[Lemma~6.4(ii)]{DenkKunzePloss2021} that $0\in \rho(A_1)$. In particular, all assumptions of Theorem~\ref{thm:domination-resolvents-individual} are satisfied which yields both the assertions.
\end{proof}

\subsection{Differential operators of odd order}
	\label{sec:application-odd-order}

We conclude this section by considering differential equations of odd order on an interval; these were recently considered in \cite[Section~6.4]{AroraGlueck2021b}. It turns out, a little unexpectedly, that these operators satisfy a uniform maximum and anti-maximum principle. Furthermore, even though the first order operator generates a positive semigroup,
if the order is strictly larger than one, then the corresponding semigroup is not even individually eventually positive. Here, we show that there is no eventual domination of resolvents.

Fix integers $m,\ell\in\bbN_0$.
On the space $L^2(0,1)$, consider the operators
\begin{align*}
	\dom{A_k} &:= \left\{f \in H^{2k+1}(0,1): f^{(j)}(0)=f^{(j)}(1) \text{ for all } j=0,1,\ldots, 2k\right\}\\
	          A_k &:= f^{(2k+1)}
\end{align*}
for $k=m,\ell$.

Let $k\in \{m,\ell\}$.
The number $0$ is spectral value of $A_k$ and a simple pole of the resolvent $\Res(\argument,A_k)$ \cite[Propositions~6.8 and~3.1(b)]{AroraGlueck2021b}. 
Also, according to  
\cite[Theorem~6.9]{AroraGlueck2021b}, we have the estimate $\Res(\lambda,A_k)\succeq \one\otimes \one$ for all $\lambda$ in a right neighbourhood of $0$. Now if $m\neq \ell$, then $A_m\neq A_{\ell}$. Thus by Theorem~\ref{thm:domination:converse}, we have proved the following:

\begin{theorem}
	If for each $0\leq f\in L^2(0,1)$, we have $\Res(\lambda, A_m)f\leq \Res(\lambda,A_{\ell})f$ for all $\lambda$ in an $f$-dependent right neighbourhood of $0$, then $m=\ell$.
\end{theorem}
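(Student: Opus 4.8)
The plan is to apply Theorem~\ref{thm:domination:converse} with the roles $A := A_m$ and $B := A_\ell$, the Banach lattice $E := L^2(0,1)$, the quasi-interior point $u := \one$, and the distinguished spectral value $\lambda_0 := 0$. The statement to be proved is precisely the contrapositive of the implication "assumption~(a) of Theorem~\ref{thm:domination:converse} holds $\Rightarrow A_m = A_\ell$", once we know that $A_m = A_\ell$ forces $m = \ell$; so the proof splits into three verifications.

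First I would check the standing hypotheses of Theorem~\ref{thm:domination:converse}: that $A_m$ and $A_\ell$ are closed, densely defined, real operators on $E$ (immediate, since they are differential operators on a dense Sobolev domain with real coefficients), that $\one$ is a quasi-interior point of $L^2(0,1)$ (standard), and that $0$ is a spectral value of $B = A_\ell$ which is a pole of $\Res(\argument, A_\ell)$. The latter is supplied verbatim by \cite[Propositions~6.8 and~3.1(b)]{AroraGlueck2021b}, which also give that the pole is simple. Second, I would observe that assumption~(a) of Theorem~\ref{thm:domination:converse}, namely $0 \le \Res(\lambda, A_m) f \le \Res(\lambda, A_\ell) f$ for $0 \le f \in E$ and $\lambda$ in an $f$-dependent right neighbourhood of $0$, is exactly the hypothesis of the theorem we are proving, so nothing is required here — but I should note that the positivity part $\Res(\lambda, A_m) f \ge 0$ is part of that hypothesis as stated. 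Assumption~(b), namely $\Res(\lambda, A_\ell) f \succeq \one$ for $0 \lneq f \in E$ and $\lambda$ in a right neighbourhood of $0$, follows from the estimate $\Res(\lambda, A_\ell) \succeq \one \otimes \one$ in a right neighbourhood of $0$ given by \cite[Theorem~6.9]{AroraGlueck2021b}, since $\one \otimes \one$ applied to any $0 \lneq f$ yields a strictly positive multiple of $\one$ (strict positivity of the functional $\one$ on $L^2(0,1)$).

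Third, I would invoke that $0$ is also a spectral value of $A = A_m$ and a pole of $\Res(\argument, A_m)$ — again from \cite[Propositions~6.8 and~3.1(b)]{AroraGlueck2021b} applied with $k = m$ — so that Theorem~\ref{thm:domination:converse} applies in full and yields $A_m = A_\ell$. It then remains to note that the map $k \mapsto A_k$ is injective: if $A_m = A_\ell$ then in particular $\dom{A_m} = \dom{A_\ell}$, and since $H^{2m+1}(0,1) \ne H^{2\ell+1}(0,1)$ for $m \ne \ell$ (the domains have different Sobolev regularity — e.g. $\dom{A_k}$ contains functions of regularity exactly $2k+1$ and no more), we must have $m = \ell$. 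Contrapositively, this is the assertion of the theorem.

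I do not expect a genuine obstacle here: the theorem is essentially a direct application of Theorem~\ref{thm:domination:converse} together with three citations to \cite{AroraGlueck2021b}. The only point deserving a line of care is the translation between the two formulations of assumption~(b) — "$\Res(\lambda, A_\ell) \succeq \one \otimes \one$ as operators" versus "$\Res(\lambda, A_\ell) f \succeq \one$ for each $0 \lneq f$" — which hinges on $\one \in (L^2)'$ being strictly positive; and the final injectivity remark, which is elementary. Everything else is bookkeeping.
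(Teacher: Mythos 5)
Your route is the same as the paper's: apply Theorem~\ref{thm:domination:converse} with $A=A_m$, $B=A_\ell$, $u=\one$, $\lambda_0=0$, feed in the cited facts from \cite{AroraGlueck2021b}, and finish with the injectivity of $k\mapsto A_k$. However, there is one concrete misstep. You write that ``the positivity part $\Res(\lambda,A_m)f\ge 0$ is part of that hypothesis as stated.'' It is not: the hypothesis of the theorem only asserts the domination inequality $\Res(\lambda,A_m)f\le \Res(\lambda,A_\ell)f$ for $0\le f$; the prefix ``$0\le$'' qualifies $f$, not $\Res(\lambda,A_m)f$. Assumption~(a) of Theorem~\ref{thm:domination:converse} genuinely requires $0\le \Res(\lambda,A_m)f$ in addition to the domination, and this positivity is used in an essential way in that theorem's proof (it is what gives $Q\ge 0$ and, via the domination $P \ge Q \ge 0$ and the rank estimate from \cite[Proposition~2.1.3]{Emelyanov2007}, $\rank Q=1$). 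As written, your verification of assumption~(a) therefore has a gap.

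The gap is closed by one line that the paper supplies and you omit: \cite[Theorem~6.9]{AroraGlueck2021b} gives $\Res(\lambda,A_k)\succeq \one\otimes\one$ for all $\lambda$ in a right neighbourhood of $0$ for \emph{every} $k$, in particular for $k=m$, whence $\Res(\lambda,A_m)f\ge 0$ for $0\le f$ and such $\lambda$. You invoke this estimate only for $A_\ell$ (to get assumption~(b)); invoking it for $A_m$ as well completes the verification of assumption~(a). With that addition, the rest of your argument — the standing hypotheses, the translation between ``$\Res(\lambda,A_\ell)\succeq \one\otimes\one$'' and ``$\Res(\lambda,A_\ell)f\succeq\one$ for each $0\lneq f$,'' and the final observation that $A_m=A_\ell$ forces $m=\ell$ because the domains have different Sobolev regularity — matches the paper's proof.
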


\section{Some remarks on eventual positivity of Cesàro means}
	\label{cesaro-means}

We close the paper with a few remarks on the eventual positivity of Cesàro means of $C_0$-semigroups. Since Cesàro means behave in several respects similarly to resolvents, it is reasonable to expect comparable behaviour concerning eventual positivity; the following results give a few first indications that this expectation is justified.

Let $E$ be a complex Banach lattice and let $(e^{tA})_{t\geq 0}$ be a real $C_0$-semigroup on $E$. For a fixed quasi-interior point $u\in E$, we say that the Cesàro means of $(e^{tA})_{t\geq 0}$ are \emph{individually eventually strongly positive with respect to $u$} if for each $0\lneq f\in E$, there exists a time $t_0\geq 0$ such that $C(r)f\succeq u$ for all $t\geq 0$; 
here the operators $C(r) \in \calL(E)$ are defined by
\[
	C(r)f:=\frac1r \int_0^r e^{sA} f\, ds \qquad \text{for all } f\in E
\]
and are called the \emph{Cesàro means of $(e^{tA})_{t\geq 0}$}. 
Some properties of the Cesàro means are given in \cite[Lemma~V.4.2]{EngelNagel2000}. Our first result here is the following criterion for the Cesàro means to be individually eventually positive.

\begin{proposition}
	\label{prop:ev-pos-cesaro-means}
	Let $(e^{tA})_{t\geq 0}$ be a real $C_0$-semigroup on a complex Banach lattice $E$ and let $u \ge 0$ be a quasi-interior point of $E$. Assume that $\spec(A)$ is non-empty and consider the following assertions:
	\begin{enumerate}[\upshape (i)]
		\item The Cesàro means are individually eventually strongly positive with respect to $u$.
		
		\item The rescaled semigroup $(e^{t(A-\spb(A))})_{t\geq 0}$ is mean ergodic, satisfies 
		\[
			\lim_{t\to \infty} \frac{1}{t}\norm{e^{t(A-\spb(A))}} = 0,
		\]
		and the mean ergodic projection $P$ satisfies $Pf\succeq u$ for all $0\lneq f\in E$.
		
		\item The rescaled semigroup $(e^{t(A-\spb(A))})_{t\geq 0}$ is Abel ergodic, bounded 
		and the Abel ergodic projection $Q$ satisfies $Qf\succeq u$ whenever $0\lneq f\in E$.
	\end{enumerate}
	We always have that {\upshape(iii)} implies {\upshape(ii)}. If, in addition, $\dom{A}\subseteq E_u$, then {\upshape(ii)} implies {\upshape(i)}.
\end{proposition}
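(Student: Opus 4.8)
The plan is to prove the two implications separately, in each case first rescaling $A$ to $A-\spb(A)$ (permissible since $\spec(A)\neq\emptyset$ forces $\spb(A)\in\bbR$) so that we may assume $\spb(A)=0$ throughout.

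For the implication (iii) $\Rightarrow$ (ii): once $\spb(A)=0$, the semigroup $(e^{tA})_{t\ge 0}$ is bounded, so $\frac1t\norm{e^{tA}}\le\frac Mt\to 0$, which is the growth condition in~(ii). It remains to upgrade Abel ergodicity to mean ergodicity and to identify the ergodic projections, and here I would appeal to the classical ergodic theory of one-parameter semigroups: a Cesàro-bounded $C_0$-semigroup with $\frac1t e^{tA}\to 0$ strongly is mean ergodic precisely when it is Abel ergodic, and then both ergodic projections coincide with the projection of $E$ onto $\ker A$ along $\overline{\Ima A}$ (see \cite[Section~V.4]{EngelNagel2000}; alternatively, for a bounded semigroup one has $\lambda\Res(\lambda,A)=\id$ on $\ker A$ and $\lambda\Res(\lambda,A)\to 0$ pointwise on $\overline{\Ima A}$ as $\lambda\downarrow 0$, so the existence of the Abel limit everywhere forces the topological direct sum $E=\ker A\oplus\overline{\Ima A}$). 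A bounded semigroup is in particular Cesàro bounded, so this applies, and the hypothesis $Qf\succeq u$ on the Abel projection $Q$ passes over verbatim to the mean ergodic projection $P=Q$.

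For the implication (ii) $\Rightarrow$ (i), assuming in addition $\dom A\subseteq E_u$: with $\spb(A)=0$ we have $C(r)\to P$ strongly on $E$, where $Pf\succeq u$ for all $0\lneq f$, and $\frac1t\norm{e^{tA}}\to 0$. The decisive identity is $A\,C(r)f=\frac1r(e^{rA}f-f)$, a consequence of $A\int_0^r e^{sA}f\, ds = e^{rA}f-f$. It shows, first, that $C(r)f\in\dom A\subseteq E_u$ for every $f$; and second --- since its right-hand side tends to $0$ as $r\to\infty$ by the growth condition and $A$ is closed --- that $Pf\in\dom A$ with $APf=0$. Therefore $C(r)f\to Pf$ in the graph norm of $A$, and because $\dom A$ embeds continuously into $E_u$ by the closed graph theorem (exactly as in Lemma~\ref{lem:domination-convergence}), it follows that $C(r)\to P$ strongly in $\calL(E,E_u)$. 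The operators $C(r)$ are real and bounded with ranges contained in $E_u$, so Lemma~\ref{lem:convergence-strong-positivity} applies to the net $(C(r))_{r>0}$: given $0\lneq f$, choose $c>0$ with $Pf\ge cu$; then there is $r_0\ge 0$ with $C(r)f\ge\frac c2 u$ for all $r\ge r_0$, i.e., $C(r)f\succeq u$ for all $r\ge r_0$, which is precisely~(i).

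The step I expect to be the main obstacle is the ergodic-theoretic one in (iii) $\Rightarrow$ (ii): passing from Abel ergodicity and boundedness to mean ergodicity, together with the identification $P=Q$ of the two ergodic projections. The second implication, by contrast, is essentially a transcription of the $\calL(E,E_u)$-convergence technique already used for resolvents and semigroups in Sections~\ref{domination-resolvents} and~\ref{domination-semigroup}. One should, however, take care that the initial rescaling is genuinely harmless for statement~(i) --- the Cesàro means of $(e^{tA})$ do not literally agree with those of the rescaled semigroup --- noting at least that the identity $A\,C(r)f=\frac1r(e^{rA}f-f)$, and hence $C(r)f\in\dom A\subseteq E_u$, remain valid irrespective of the value of $\spb(A)$.
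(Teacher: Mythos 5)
Your proof is correct and follows essentially the same route as the paper: for (iii) $\Rightarrow$ (ii) the paper likewise invokes the classical fact that a bounded Abel ergodic semigroup is mean ergodic with the same projection (citing \cite[Proposition~4.3.4(b)]{ArendtBattyHieberNeubrander2011}), and for (ii) $\Rightarrow$ (i) it uses exactly your chain: the identity $AC(r)=\frac{1}{r}(e^{rA}-I)\to 0$, graph-norm convergence $C(r)f\to Pf$, the closed-graph embedding $\dom{A}\hookrightarrow E_u$, and Lemma~\ref{lem:convergence-strong-positivity}. The rescaling subtlety you flag at the end is real but is treated no more carefully in the paper, which also simply asserts ``without loss of generality $\spb(A)=0$''.
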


Before giving the proof, we briefly recall the notions used above. A $C_0$-semigroup $(e^{tA})_{t\geq 0}$ on a Banach space $E$ is called \emph{mean ergodic} if its Cesàro means $C(r)$ converge strongly as $r\to \infty$. If this is the case and, in addition $\norm{e^{t(A-\spb(A))}}/t \to 0$ as $t \to \infty$, then the limit operator 
\[
E\ni f\mapsto Pf:= \lim_{r\to\infty} C(r)f
\]
is a projection and is called the corresponding \emph{mean ergodic projection}. A brief overview of mean ergodicity of semigroups is given in \cite[Section~V.4]{EngelNagel2000}. Furthermore, the semigroup $(e^{tA})_{t\geq 0}$  is called \emph{Abel ergodic} if there exists $\lambda_0>0$ such that $(0,\lambda_0)\subseteq \resSet(A)$ and
\[
	\sup_{\lambda\in (0,\lambda_0)} \norm{\lambda\Res(\lambda,A)}<\infty
\]
and the limit $\lim_{\lambda\downarrow 0} \lambda\Res(\lambda,A)$ exists in the strong operator topology. In this case, the operator
\[
	E\ni f\mapsto Qf:=  \lim_{\lambda\downarrow 0} \lambda\Res(\lambda,A)f
\]
is also a projection and is called the corresponding \emph{Abel ergodic projection}. We refer to \cite[Section~4.3]{ArendtBattyHieberNeubrander2011} for details about Abel ergodicity.

\begin{proof}[Proof of Proposition~\ref{prop:ev-pos-cesaro-means}]
	We assume without loss of generality that $\spb(A)=0$. 
	
	``(iii) $\Rightarrow$ (ii)''
	If the semigroup generated by $A-\spb(A)$ is Abel ergodic and bounded, then it is mean ergodic and the corresponding ergodic projections coincide by \cite[Proposition~4.3.4(b)]{ArendtBattyHieberNeubrander2011}.
		 
	We now assume that $\dom{A}\subseteq E_u$.
	
	``(ii) $\Rightarrow$ (i)'': Let $C(r):=\frac1r\int_0^r e^{sA}\, ds$ denote the Cesàro means of the semigroup. Since the semigroup is mean ergodic, we have $C(r)\to P$ strongly as $r\to \infty$ and $\Ima P=\ker A$. Therefore
	\[
		AC(r) = \frac{A}r\int_0^r e^{sA}\, ds=\frac{e^{rA}-I}{r} \to 0 = AP
	\]
	with respect to the strong operator topology in $E$ as $r\to\infty$. As a result, $C(r)f\to Pf$ in $\dom{A}$ (endowed with the graph norm) as $r\to \infty$ for each $f \in E$. Since $\dom{A}$ embeds continuously into $E_u$ (by the closed graph theorem) the convergence holds in $E_u$ as well. 
	
	As the semigroup is real, so are the Cesàro means $C(r)$ and thus, assertion~(i) follows from Lemma~\ref{lem:convergence-strong-positivity}.
\end{proof}

Under certain technical assumptions, one can give a characterisation for eventual positivity of the Cesàro means which shows that this property is closely related to (anti-)maximum principles.

\begin{theorem}
	\label{thm:ev-pos-cesaro-means}
	Let $(e^{tA})_{t\geq 0}$ be a real $C_0$-semigroup on a complex Banach lattice $E$ and let $0\leq u\in E$ be such that $\dom{A}\subseteq E_u$. Assume that the spectral bound $\spb(A)$ is a pole of the resolvent $\Res(\argument,A)$ and that the rescaled semigroup $(e^{t(A-\spb(A))})_{t\geq 0}$ is bounded. 
	Then the following are equivalent.
	\begin{enumerate}[\upshape (i)]
		\item The Cesàro means are individually eventually strongly positive with respect to $u$.
		
		\item The spectral projection $P$ corresponding to $\spb(A)$ satisfies $Pf\succeq u$ whenever $0\lneq f\in E$.
		
		\item The individual strong maximum principle with respect to $u$ holds, i.e., for every $0 \lneq f \in E$, we have 
			\begin{align*}
				\Res(\lambda,A)f \succeq u
			\end{align*}
			for all $\lambda$ in an $f$-dependent right neighbourhood of $\lambda_0$.
			
		\item The individual strong anti-maximum principle with respect to $u$ holds, i.e., for every $0 \lneq f \in E$, we have 
			\begin{align*}
				\Res(\mu,A)f \preceq -u
			\end{align*}
			for all $\mu$ in an $f$-dependent left neighbourhood of $\lambda_0$.
	\end{enumerate}
\end{theorem}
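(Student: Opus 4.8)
The plan is as follows. Normalise $\spb(A) = 0$, so that the rescaled semigroup occurring in the hypotheses is $(e^{tA})_{t \ge 0}$ itself and is bounded, say $\norm{e^{tA}} \le M$ for all $t \ge 0$. Since $\spb(A) = 0$ is a pole of $\Res(\argument, A)$ it lies in $\spec(A)$, and from $\Res(\lambda, A) = \int_0^\infty e^{-\lambda t} e^{tA} \dx t$ we get $\norm{\lambda \Res(\lambda, A)} \le M$ for all $\lambda > 0$; comparing this with the Laurent expansion of $\Res(\argument, A)$ at $0$ shows that the pole is simple. Hence Lemma~\ref{lem:domination-convergence}(ii) gives $\lambda \Res(\lambda, A) \to P$ in $\calL(E, E_u)$ as $\lambda \to 0$ through real values (from either side, since $0$ is isolated in $\spec(A)$), where $P$ denotes the spectral projection associated with $0$. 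Moreover, as in the proof of Proposition~\ref{prop:ev-pos-cesaro-means}, the semigroup is Abel ergodic and bounded, hence mean ergodic with ergodic projection $P$ (which also equals its Abel projection $Q$), and the identity $AC(r) = (e^{rA} - I)/r \to 0$ (which uses boundedness) upgrades the convergence $C(r) \to P$ to the strong operator topology of $\calL(E, E_u)$ via the continuous embedding $\dom{A} \hookrightarrow E_u$ furnished by the closed graph theorem.

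With this in hand, the equivalences ``(ii) $\Leftrightarrow$ (iii)'' and ``(ii) $\Leftrightarrow$ (iv)'' are short. The forward implications ``(iii) $\Rightarrow$ (ii)'' and ``(iv) $\Rightarrow$ (ii)'' are exactly \cite[Theorem~4.1]{DanersGlueck2017}. For the converse, assume (ii), fix $0 \lneq f \in E$ and pick $c > 0$ with $Pf \ge cu$. Applying Lemma~\ref{lem:convergence-strong-positivity} to the real operators $\lambda \Res(\lambda, A)$, viewed as a net indexed by $\lambda \downarrow 0$ (respectively $\lambda \uparrow 0$), produces $\delta > 0$ with $\lambda \Res(\lambda, A) f \ge \tfrac{c}{2} u$ for all such $\lambda$ with $\modulus{\lambda} < \delta$; dividing by $\lambda$ and observing the sign change in the second case gives $\Res(\lambda, A) f \ge \tfrac{c}{2\lambda} u \succeq u$ for $\lambda \in (0, \delta)$ and $\Res(\mu, A) f \le \tfrac{c}{2\mu} u \preceq -u$ for $\mu \in (-\delta, 0)$, which are precisely (iii) and (iv).

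It remains to treat ``(i) $\Leftrightarrow$ (ii)''. If (ii) holds, then (i) follows from Proposition~\ref{prop:ev-pos-cesaro-means}: under the standing assumptions its assertion~(iii) holds here (the semigroup is bounded and Abel ergodic with Abel projection $Q = P$ satisfying $Qf \succeq u$), and since $\dom{A} \subseteq E_u$, assertion~(i) of that proposition — which is our~(i) — follows; alternatively, one applies Lemma~\ref{lem:convergence-strong-positivity} directly to the net $(C(r))_{r}$. Conversely, suppose (i) holds and fix $0 \lneq f \in E$; choose $t_0 \ge 0$ and $c > 0$ with $C(r) f \ge cu$ for all $r \ge t_0$. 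Since $C(r) f \to Pf$ in $E_u$ and the positive cone of the Banach lattice $E_u$ is closed, letting $r \to \infty$ yields $Pf - cu \ge 0$, i.e.\ $Pf \succeq u$, which is (ii).

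The main obstacle is the preliminary step: deducing simplicity of the pole from boundedness of the rescaled semigroup and, more substantially, promoting the convergences $\lambda \Res(\lambda, A) \to P$ and $C(r) \to P$ from $\calL(E)$ to $\calL(E, E_u)$ — this is where the hypothesis $\dom{A} \subseteq E_u$ enters, and it is what makes Lemmas~\ref{lem:domination-convergence} and~\ref{lem:convergence-strong-positivity} applicable. Once that is in place, the four equivalences reduce to routine bookkeeping together with the quoted result \cite[Theorem~4.1]{DanersGlueck2017} on (anti-)maximum principles.
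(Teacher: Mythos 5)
Your argument is correct and follows the same skeleton as the paper's proof: normalise $\spb(A)=0$, use boundedness to see that $0$ is a simple pole, identify the mean ergodic/Abel ergodic projection with the spectral projection $P$, push the convergences $\lambda\Res(\lambda,A)\to P$ and $C(r)\to P$ into $\calL(E,E_u)$ via $\dom{A}\subseteq E_u$, and then route all four assertions through~(ii). The difference is one of packaging: the paper disposes of the equivalence (ii)$\Leftrightarrow$(iii)$\Leftrightarrow$(iv) by citing \cite[Theorem~4.4]{DanersGlueckKennedy2016b}, obtains uniform mean ergodicity from \cite[Theorem~V.4.10]{EngelNagel2000}, and gets (i)$\Rightarrow$(ii) from \cite[Lemma~4.8(ii)]{DanersGlueckKennedy2016b}, whereas you re-derive each of these in place -- the (anti-)maximum principles from Lemma~\ref{lem:domination-convergence}(ii) and Lemma~\ref{lem:convergence-strong-positivity} (plus \cite[Theorem~4.1]{DanersGlueck2017} for the converse directions), mean ergodicity via Abel ergodicity and \cite[Proposition~4.3.4(b)]{ArendtBattyHieberNeubrander2011}, and (i)$\Rightarrow$(ii) by passing to the limit in $C(r)f\ge cu$ using closedness of the positive cone. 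This buys a more self-contained exposition that makes visible exactly where $\dom{A}\subseteq E_u$ and the boundedness of the semigroup enter, at the cost of reproving results that are already available; the only implication for which both proofs genuinely rely on Proposition~\ref{prop:ev-pos-cesaro-means} is (ii)$\Rightarrow$(i). All the individual steps you carry out (the Laplace-transform bound forcing simplicity of the pole, the sign flip when dividing by $\mu<0$ in the anti-maximum direction, the limit argument for (i)$\Rightarrow$(ii)) are sound.
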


\begin{proof}
	By replacing $A$ by $A-\spb(A)$, we may assume that $\spb(A)=0$. The equivalence of (ii), (iii), and (iv) under the assumption $\dom{A}\subseteq E_u$ was proved in \cite[Theorem~4.4]{DanersGlueckKennedy2016b}.
	On the other hand, boundedness of the semigroup implies that $0$ is actually a simple pole of the resolvent $\Res(\argument,A)$. We infer that $(e^{tA})_{t\geq 0}$ is uniformly mean ergodic and the corresponding mean ergodic projection coincides with the spectral projection $P$ associated with $0$; see \cite[Theorem~ V.4.10]{EngelNagel2000}. Since $\dom{A}\subseteq E_u$, the implication (ii) $\Rightarrow$ (i) now holds due to Proposition~\ref{prop:ev-pos-cesaro-means}.
	
	Lastly, if (i) holds, then so does (ii) by \cite[Lemma~4.8(ii)]{DanersGlueckKennedy2016b}.
\end{proof}

It has recently been shown by the present authors \cite[Theorem~1.2]{AroraGlueck2022a} that the condition $\dom{A} \subseteq E_u$ is, under mild assumptions on $A$, necessary for assertions~(iii) and~(iv) to hold simultaneously. 

Clearly, the above results can only be considered as a starting point for a more thorough analysis, in particular since the methods used in this section are very closely related to the methods used in earlier articles about eventual positivity. 
In the future, it would, for instance, be desirable to better understand the relation between eventual positivity of the resolvent and the Cesàro means, without imposing the a priori assumption $\dom{A} \subseteq E_u$.
However, this seems to require more sophisticated analysis and probably different techniques.

We end this section with the observation that, by using Theorem~\ref{thm:ev-pos-cesaro-means}, it can be shown for several semigroups considered in \cite{DanersGlueckKennedy2016a,DanersGlueckKennedy2016b, AroraGlueck2021b}, that the Cesàro means are individually eventually positive.

\subsection*{Acknowledgements} 
The authors are grateful to Delio Mugnolo for pointing out the reference \cite{Ouhabaz1996}.

Part of the work on this article was done during a pleasant visit of the first author to the second author at Universität Passau, Germany in 2021.
The first named author was supported by 
Deutscher Aka\-de\-mi\-scher Aus\-tausch\-dienst (Forschungs\-stipendium-Promotion in Deutschland).

\bibliographystyle{plainurl}
\bibliography{literature}

\begin{thebibliography}{10}

\bibitem{Khalid2018}
Khalid Akhlil.
\newblock Locality and domination of semigroups.
\newblock {\em Result. Math.}, 73(2):11, 2018.
\newblock Id/No 59.
\newblock \href {https://doi.org/10.1007/s00025-018-0822-9}
  {\path{doi:10.1007/s00025-018-0822-9}}.

\bibitem{ArendtBattyHieberNeubrander2011}
Wolfgang {Arendt}, Charles J.~K. {Batty}, Matthias {Hieber}, and Frank
  {Neubrander}.
\newblock {\em {Vector-valued {L}aplace transforms and {C}auchy problems}},
  volume~96.
\newblock Basel: Birkh\"auser, 2 edition, 2011.
\newblock \href {https://doi.org/10.1007/978-3-0348-0087-7}
  {\path{doi:10.1007/978-3-0348-0087-7}}.

\bibitem{ArendtWarma2003}
Wolfgang Arendt and Mahamadi Warma.
\newblock Dirichlet and {N}eumann boundary conditions: {W}hat is in between?
\newblock {\em J. Evol. Equ.}, 3(1):119--135, 2003.
\newblock Dedicated to Philippe B{\'e}nilan.
\newblock \href {https://doi.org/10.1007/s000280300005}
  {\path{doi:10.1007/s000280300005}}.

\bibitem{Arora2021}
Sahiba {Arora}.
\newblock {Locally eventually positive operator semigroups}.
\newblock {\em {To appear in the J. Oper. Theory}}.
\newblock \href {http://arxiv.org/abs/2101.11386v3}
  {\path{arXiv:2101.11386v3}}.

\bibitem{AroraChillDjida2021}
Sahiba {Arora}, Ralph {Chill}, and Jean-Daniel {Djida}.
\newblock {Domination of semigroups generated by regular forms}.
\newblock 2021.
\newblock Preprint.
\newblock \href {http://arxiv.org/abs/2111.15489v1}
  {\path{arXiv:2111.15489v1}}.

\bibitem{AroraGlueck2022a}
Sahiba {Arora} and Jochen {Gl\"uck}.
\newblock {A characterization of the individual maximum and anti-maximum
  principle}.
\newblock 2022.
\newblock Preprint.
\newblock \href {http://arxiv.org/abs/2203.05680v1}
  {\path{arXiv:2203.05680v1}}.

\bibitem{AroraGlueck2021b}
Sahiba {Arora} and Jochen {Gl\"uck}.
\newblock {An operator theoretic approach to uniform (anti-)maximum
  principles}.
\newblock {\em {J. Differ. Equations}}, 310:164--197, 2022.
\newblock \href {https://doi.org/10.1016/j.jde.2021.11.037}
  {\path{doi:10.1016/j.jde.2021.11.037}}.

\bibitem{BatkaiKramarRhandi2017}
Andr\'as {B\'atkai}, Marjeta~K. {Fijav\v{z}}, and Abdelaziz {Rhandi}.
\newblock {\em {Positive operator semigroups: From finite to infinite
  dimensions}}, volume 257.
\newblock Basel: Birkh\"auser/Springer, 2017.
\newblock \href {https://doi.org/10.1007/978-3-319-42813-0}
  {\path{doi:10.1007/978-3-319-42813-0}}.

\bibitem{BattyRobinson1984}
Charles J.~K. {Batty} and Derek~W. {Robinson}.
\newblock Positive one-parameter semigroups on ordered {Banach} spaces.
\newblock {\em Acta Appl. Math.}, 2:221--296, 1984.
\newblock \href {https://doi.org/10.1007/BF02280855}
  {\path{doi:10.1007/BF02280855}}.

\bibitem{BeckerGregorioMugnolo2021}
Simon {Becker}, Federica {Gregorio}, and Delio {Mugnolo}.
\newblock {Schr\"odinger and polyharmonic operators on infinite graphs:
  parabolic well-posedness and \(p\)-independence of spectra}.
\newblock {\em {J. Math. Anal. Appl.}}, 495(2):44, 2021.
\newblock Id/No 124748.
\newblock \href {https://doi.org/10.1016/j.jmaa.2020.124748}
  {\path{doi:10.1016/j.jmaa.2020.124748}}.

\bibitem{Daners2014}
Daniel {Daners}.
\newblock {Non-positivity of the semigroup generated by the
  Dirichlet-to-Neumann operator}.
\newblock {\em {Positivity}}, 18(2):235--256, 2014.
\newblock \href {https://doi.org/10.1007/s11117-013-0243-7}
  {\path{doi:10.1007/s11117-013-0243-7}}.

\bibitem{DanersGlueck2017}
Daniel {Daners} and Jochen {Gl\"uck}.
\newblock {The role of domination and smoothing conditions in the theory of
  eventually positive semigroups}.
\newblock {\em {Bull. Aust. Math. Soc.}}, 96(2):286--298, 2017.
\newblock \href {https://doi.org/10.1017/S0004972717000260}
  {\path{doi:10.1017/S0004972717000260}}.

\bibitem{DanersGlueck2018b}
Daniel {Daners} and Jochen {Gl\"uck}.
\newblock {A criterion for the uniform eventual positivity of operator
  semigroups}.
\newblock {\em {Integral Equations Oper. Theory}}, 90(4):19, 2018.
\newblock Id/No 46.
\newblock \href {https://doi.org/10.1007/s00020-018-2478-y}
  {\path{doi:10.1007/s00020-018-2478-y}}.

\bibitem{DanersGlueckKennedy2016b}
Daniel {Daners}, Jochen {Gl\"uck}, and James~B. {Kennedy}.
\newblock {Eventually and asymptotically positive semigroups on Banach
  lattices}.
\newblock {\em {J. Differ. Equations}}, 261(5):2607--2649, 2016.
\newblock \href {https://doi.org/10.1016/j.jde.2016.05.007}
  {\path{doi:10.1016/j.jde.2016.05.007}}.

\bibitem{DanersGlueckKennedy2016a}
Daniel {Daners}, Jochen {Gl\"uck}, and James~B. {Kennedy}.
\newblock {Eventually positive semigroups of linear operators}.
\newblock {\em {J. Math. Anal. Appl.}}, 433(2):1561--1593, 2016.
\newblock \href {https://doi.org/10.1016/j.jmaa.2015.08.050}
  {\path{doi:10.1016/j.jmaa.2015.08.050}}.

\bibitem{DenkKunzePloss2021}
Robert {Denk}, Markus {Kunze}, and David {Plo{\ss}}.
\newblock {The Bi-Laplacian with Wentzell boundary conditions on Lipschitz
  domains}.
\newblock {\em {Integral Equations Oper. Theory}}, 93(2):26, 2021.
\newblock Id/No 13.
\newblock \href {https://doi.org/10.1007/s00020-021-02624-w}
  {\path{doi:10.1007/s00020-021-02624-w}}.

\bibitem{Emelyanov2007}
Eduard~Y. {Emel'yanov}.
\newblock {\em {Non-spectral asymptotic analysis of one-parameter operator
  semigroups}}, volume 173.
\newblock Basel: Birkh\"auser, 2007.
\newblock \href {https://doi.org/10.1007/978-3-7643-8114-1}
  {\path{doi:10.1007/978-3-7643-8114-1}}.

\bibitem{EngelNagel2000}
Klaus-Jochen {Engel} and Rainer {Nagel}.
\newblock {\em {One-parameter semigroups for linear evolution equations}},
  volume 194.
\newblock Berlin: Springer, 2000.
\newblock \href {https://doi.org/10.1007/b97696} {\path{doi:10.1007/b97696}}.

\bibitem{FerreiraFerreira2019}
Lucas C.~F. {Ferreira} and Vanderley~A. jun. {Ferreira}.
\newblock {On the eventual local positivity for polyharmonic heat equations}.
\newblock {\em {Proc. Am. Math. Soc.}}, 147(10):4329--4341, 2019.
\newblock \href {https://doi.org/10.1090/proc/14565}
  {\path{doi:10.1090/proc/14565}}.

\bibitem{FerreroGazzolaGrunau2008}
Alberto {Ferrero}, Filippo {Gazzola}, and Hans-Christoph {Grunau}.
\newblock {Decay and local eventual positivity for biharmonic parabolic
  equations}.
\newblock {\em {Discrete Contin. Dyn. Syst.}}, 21(4):1129--1157, 2008.
\newblock \href {https://doi.org/10.3934/dcds.2008.21.1129}
  {\path{doi:10.3934/dcds.2008.21.1129}}.

\bibitem{GazzolaGrunau2008}
Filippo {Gazzola} and Hans-Christoph {Grunau}.
\newblock {Eventual local positivity for a biharmonic heat equation in
  \(\mathbb R^n\)}.
\newblock {\em {Discrete Contin. Dyn. Syst., Ser. S}}, 1(1):83--87, 2008.
\newblock \href {https://doi.org/10.3934/dcdss.2008.1.83}
  {\path{doi:10.3934/dcdss.2008.1.83}}.

\bibitem{Glueck2016}
Jochen {Gl\"uck}.
\newblock {\em {Invariant sets and long time behaviour of operator
  semigroups}}.
\newblock PhD thesis, Universit{\"a}t Ulm, 2016.
\newblock \href {https://doi.org/10.18725/OPARU-4238}
  {\path{doi:10.18725/OPARU-4238}}.

\bibitem{GlueckMugnolo2021}
Jochen {Gl\"uck} and Delio {Mugnolo}.
\newblock {Eventual domination for linear evolution equations}.
\newblock {\em {Math. Z.}}, 299(3-4):1421--1443, 2021.
\newblock \href {https://doi.org/10.1007/s00209-021-02721-x}
  {\path{doi:10.1007/s00209-021-02721-x}}.

\bibitem{GuidottiMerino1997}
Patrick {Guidotti} and Sandro {Merino}.
\newblock {Hopf bifurcation in a scalar reaction diffusion equation}.
\newblock {\em {J. Differ. Equations}}, 140(1):209--222, 1997.
\newblock \href {https://doi.org/10.1006/jdeq.1997.3307}
  {\path{doi:10.1006/jdeq.1997.3307}}.

\bibitem{GuidottiMerino2000}
Patrick {Guidotti} and Sandro {Merino}.
\newblock {Gradual loss of positivity and hidden invariant cones in a scalar
  heat equation}.
\newblock {\em {Differ. Integral Equ.}}, 13(10-12):1551--1568, 2000.
\newblock URL: \url{https://bit.ly/1356061139}.

\bibitem{HusseinMugnolo2020}
Amru Hussein and Delio Mugnolo.
\newblock Laplacians with point interactions -- expected and unexpected
  spectral properties.
\newblock In {\em Semigroups of operators -- theory and applications. Selected
  papers based on the presentations at the conference, SOTA 2018, Kazimierz
  Dolny, Poland, September 30 -- October 5, 2018. In honour of Jan Kisy\'nski's
  85th birthday}, pages 47--67. Cham: Springer, 2020.
\newblock \href {https://doi.org/10.1007/978-3-030-46079-2_3}
  {\path{doi:10.1007/978-3-030-46079-2_3}}.

\bibitem{Meyer-Nieberg1991}
Peter {Meyer-Nieberg}.
\newblock {\em {Banach lattices}}.
\newblock Berlin, Heidelberg: Springer-Verlag, 1991.
\newblock \href {https://doi.org/10.1007/978-3-642-76724-1}
  {\path{doi:10.1007/978-3-642-76724-1}}.

\bibitem{Nagel1986}
Rainer {Nagel}, editor.
\newblock {\em {One-parameter semigroups of positive operators}}, volume 1184
  of {\em Lecture Notes in Mathematics}.
\newblock Cham: Springer, 1986.
\newblock \href {https://doi.org/10.1007/BFb0074922}
  {\path{doi:10.1007/BFb0074922}}.

\bibitem{NoutsosTsatsomeros2008}
Dimitrios {Noutsos} and Michael~J. {Tsatsomeros}.
\newblock {Reachability and holdability of nonnegative states}.
\newblock {\em {SIAM J. Matrix Anal. Appl.}}, 30(2):700--712, 2008.
\newblock \href {https://doi.org/10.1137/070693850}
  {\path{doi:10.1137/070693850}}.
  

\bibitem{Ouhabaz1996}
El~Maati {Ouhabaz}.
\newblock {Invariance of closed convex sets and domination criteria for semigroups}.
\newblock {\em {Potential Anal.}}, 5(6):611--625, 1996.
\newblock \href {https://doi.org/10.1007/BF00275797}
  {\path{doi:10.1007/BF00275797}}.

\bibitem{Ouhabaz2005}
El~Maati {Ouhabaz}.
\newblock {\em {Analysis of heat equations on domains}}, volume~31.
\newblock Princeton, NJ: Princeton University Press, 2005.
\newblock \href {https://doi.org/10.1515/9781400826483}
  {\path{doi:10.1515/9781400826483}}.

\bibitem{Schaefer1974}
Helmut~H. {Schaefer}.
\newblock {\em {Banach lattices and positive operators}}, volume 215.
\newblock Cham: Springer, 1974.
\newblock \href {https://doi.org/10.1007/978-3-642-65970-6}
  {\path{doi:10.1007/978-3-642-65970-6}}.

\bibitem{Thieme1998}
Horst~R. {Thieme}.
\newblock {Balanced exponential growth of operator semigroups}.
\newblock {\em {J. Math. Anal. Appl.}}, 223(1):30--49, 1998.
\newblock \href {https://doi.org/10.1006/jmaa.1998.5952}
  {\path{doi:10.1006/jmaa.1998.5952}}.

\bibitem{Vogt2022}
Hendrik {Vogt}.
\newblock {Stability of uniformly eventually positive $C_0$-semigroups on
  $L_p$-spaces}.
\newblock {\em {Proc. Am. Math. Soc.}}, 2022.
\newblock Published electronically.
\newblock \href {https://doi.org/10.1090/proc/15926}
  {\path{doi:10.1090/proc/15926}}.

\end{thebibliography}

\end{document}